\documentclass[11pt]{amsart}
\usepackage{amssymb,amscd,amsthm,verbatim}
\usepackage{amsbsy}
\usepackage{latexsym}
\usepackage[all,cmtip]{xy}
\usepackage[mathscr]{euscript}

\usepackage{etex}
\usepackage{graphicx}
\usepackage{pictex}
\usepackage{epstopdf}
\usepackage{color}


\usepackage{bbm}

\date{\today}

\newcommand{\Z}{{\mathbb Z}}

\newcommand{\D}{{\mathbb D}}

\newcommand{\N}{{\mathbb N}}




\renewcommand{\Re}{{\mathrm{Re} \,}}














\newtheorem{theorem}{Theorem}[section]
\newtheorem{lemma}[theorem]{Lemma}
\newtheorem{prop}[theorem]{Proposition}

\theoremstyle{definition}

\newtheorem{remark}[theorem]{Remark}

\theoremstyle{plain}

\sloppy

\allowdisplaybreaks \numberwithin{equation}{section}

\DeclareMathOperator{\supp}{supp}

\begin{document}

\title{Simon's OPUC Hausdorff Dimension Conjecture}

\author[D.\ Damanik]{David Damanik}
\address{Department of Mathematics, Rice University, Houston, TX~77005, USA}
\email{damanik@rice.edu}
\thanks{D.D.\ was supported in part by NSF grant DMS--1700131 and by an Alexander von Humboldt Foundation research award}

\author[S.\ Guo]{Shuzheng Guo}
\address{Ocean University of China, Qingdao 266100, Shandong, China and Rice University, Houston, TX~77005, USA}
\email{gszouc@gmail.com}
\thanks{S.G.\ was supported by CSC (No. 201906330008) and NSFC (No. 11571327)}

\author[D.\ Ong]{Darren C.\ Ong}
\address{Department of Mathematics, Xiamen University Malaysia, 43900 Sepang, Selangor Darul Ehsan, Malaysia}
\email{darrenong@xmu.edu.my}
\thanks{D.O.\ was supported in part by a grant from the Fundamental Research Grant Scheme from the Malaysian Ministry of Education (Grant No: FRGS/1/2018/STG06/XMU/02/1) and a Xiamen University Malaysia Research Fund (Grant Number: XMUMRF/2020-C5/IMAT/0011)}

\begin{abstract}
We show that the Szeg\H{o} matrices, associated with Verblunsky coefficients $\{\alpha_n\}_{n\in\Z_+}$ obeying $\sum_{n = 0}^\infty n^\gamma |\alpha_n|^2 < \infty$ for some $\gamma \in (0,1)$, are bounded for values $z \in \partial \D$ outside a set of Hausdorff dimension no more than $1 - \gamma$. In particular, the singular part of the associated probability measure on the unit circle is supported by a set of Hausdorff dimension no more than $1-\gamma$. This proves the OPUC Hausdorff dimension conjecture of Barry Simon from 2005.
\end{abstract}

\maketitle

\section{Introduction}

This paper is concerned with the correspondence between non-trivial probability measures on the unit circle and their associated Verblunsky coefficients. This area has seen a large amount of activity in the past two decades, primarily due to the monographs \cite{S04, S05} by Barry Simon to which we refer the reader for general background material. In these monographs, Simon makes a number of conjectures, which are listed for the reader's convenience in \cite[Appendix~D]{S05}. Our main purpose here is to prove one of these conjectures, namely the so-called \emph{Hausdorff dimension conjecture}, which is stated in \cite[Conjecture~10.12.10]{S05}.

Before stating our result, let us describe the setting. Suppose $\mu$ is a non-trivial (i.e., not finitely supported) probability measure on the unit circle $\partial \mathbb{D} = \{ z \in \mathbb{C} : |z| = 1 \}$. By the non-triviality assumption, the functions $1, z, z^2, \cdots$ are linearly independent in the Hilbert space $\mathcal{H} = L^2(\partial\mathbb{D}, d\mu)$, and hence one can form, by the Gram-Schmidt procedure, the \emph{monic orthogonal polynomials} $\Phi_n(z)$, whose \emph{Szeg\H{o} dual} is defined by $\Phi_n^{*} = z^n\overline{\Phi_n({1}/{\overline{z}})}$. There are constants $\{\alpha_n\}_{n \in \Z_+}$ in $\mathbb{D}=\{z\in\mathbb{C}:|z|<1\}$, called the \emph{Verblunsky coefficients}, so that
\begin{equation}\label{eq01}
\Phi_{n+1}(z) = z \Phi_n(z) - \overline{\alpha}_n \Phi_n^*(z), \qquad \textrm{ for } n \in \Z_+,
\end{equation}
which is the so-called \emph{Szeg\H{o} recurrence}. Conversely, every sequence $\{\alpha_n\}_{n \in \Z_+}$ in $\mathbb{D}$ arises as the sequence of Verblunsky coefficients for a suitable nontrivial probability measure on $\partial\mathbb D$.

If we consider instead the \emph{orthonormal polynomials}
$$
\varphi(z, n)=\frac{\Phi_n(z)}{\|\Phi_n(z)\|_{\mu}},
$$
where $\|\cdot\|_{\mu}$ is the norm of $\mathcal{H}$, it is easy to see that \eqref{eq01} becomes
\begin{equation}\label{eq01b}
\rho_n \varphi(z, n+1 ) = z \varphi (z , n) - \overline{\alpha}_n \varphi^*(z , n), \textrm{ for } n\in \N,
\end{equation}
where $\rho_n = (1 - |\alpha_n|^2)^{1/2}$.

The Szeg\H{o} recurrence can be written in a matrix form as follows:
\begin{equation*}
\left(
\begin{matrix}
\varphi(z , n+1)\\
\varphi^{*}(z , n+1)
\end{matrix}
\right)
=
\frac{1}{\rho_n}
\left(
\begin{matrix}
z & -\overline{\alpha}_n\\
-\alpha_n z & 1
\end{matrix}
\right)
\left(
\begin{matrix}
\varphi(z , n)\\
\varphi^{*}(z , n)
\end{matrix}
\right), \textrm{ for } n\in\N.
\end{equation*}
Alternatively, one can consider a different initial condition and derive the \emph{orthogonal polynomials of the second kind}, by setting $\psi(z , 0) = 1$ and then
\begin{equation*}
\left(
\begin{matrix}
\psi(z , n+1)\\
- \psi^{*}(z , n+1)
\end{matrix}
\right)
=
\frac{1}{\rho_n}
\left(
\begin{matrix}
z & -\overline{\alpha}_n\\
-\alpha_n z & 1
\end{matrix}
\right)
\left(
\begin{matrix}
\psi(z , n)\\
- \psi^{*}(z , n)
\end{matrix}
\right), \textrm{ for } n\in\N.
\end{equation*}
Define the associated \emph{Szeg\H{o} matrices}
$$
T_n(z)=\frac{1}{2}
\left(
\begin{matrix}
\varphi_n(z) + \psi_n(z) & \varphi_n(z) - \psi_n(z)\\
\varphi^*_n(z) - \psi^*_n(z) & \varphi^*_n(z) + \psi^*_n(z)
\end{matrix}
\right), \textrm{ for } n\in\N.
$$

We will assume that the Verblunsky coefficients obey a decay estimate of the form
\begin{equation}\label{e.vcass}
\sum_{n = 0}^\infty n^\gamma |\alpha_n|^2 < \infty
\end{equation}
for some $\gamma \in (0,1)$. Simon conjectured in \cite{S05} that under this decay assumption, the Szeg\H{o} matrices are bounded for values $z \in \partial \D$ outside a set of Hausdorff dimension no more than $1 - \gamma$; compare \cite[Conjecture~10.12.10]{S05}. The primary interest in such a statement comes from the general principle which says that the singular part of $\mu$ is supported by the set of those $z \in \partial \D$ for which the associated Szeg\H{o} matrices are unbounded, that is, such a result will imply that the singular part of the measure, $\mu_\mathrm{sing}$, has a support of Hausdorff dimension at most $1 - \gamma$.

The goal of this paper is to prove this conjecture:

\begin{theorem}\label{t.main}
Suppose that $\mu$ is such that the associated Verblunsky coefficients satisfy \eqref{e.vcass} for $\gamma \in (0,1)$. Then there is a set $S \subset \partial \D$ of Hausdorff dimension at most $1 - \gamma$ so that for $z \in \partial \D \setminus S$,
$$
\sup_{n \ge 0} \|T_n(z)\| < \infty.
$$
In particular, $\mu_\mathrm{sing}$ is supported by a set of dimension at most $1 - \gamma$.
\end{theorem}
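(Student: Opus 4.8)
The plan is to follow the strategy pioneered in the analogous Schr\"odinger/Jacobi setting by Christ--Kiselev and further developed in work on power-decaying potentials, transplanted to OPUC via the $\ell^2$-type bounds on transfer matrix iterates. First I would Pr\"ufer-ize the Szeg\H{o} recursion: writing $\varphi(z,n)$ and $\varphi^*(z,n)$ in modulus-argument form relative to $z = e^{i\theta}$, one obtains a pair of coupled equations for a Pr\"ufer radius $R_n(\theta)$ and a Pr\"ufer angle, in which the logarithmic derivative of $R_n$ is, to leading order, $\Re$ of $\alpha_n$ times an oscillatory exponential $e^{i(\text{something})\theta + i(\text{Pr\"ufer phase})}$ plus a term of size $O(|\alpha_n|^2)$. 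Summing, $\log \|T_n(z)\|$ is controlled by $\sum |\alpha_n|^2$ (which is finite for \emph{every} $\theta$) plus an oscillatory sum $\sum_{k} \alpha_k e^{ik\theta}(\cdots)$. The boundedness of $T_n$ thus reduces to the a.e.-type convergence of this oscillatory series, uniformly in $n$.

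The heart of the matter is then a maximal-function estimate. Define, for a sequence $a \in \ell^2$ with the weighted bound $\sum n^\gamma |a_n|^2 < \infty$, the maximal operator $(\CM a)(\theta) = \sup_N \big| \sum_{n=0}^N a_n e^{in\theta}\big|$ and, more precisely, the variant adapted to the Pr\"ufer dynamics where the phase is not exactly $n\theta$ but is a perturbation thereof. I would first establish the relevant estimate for genuine exponential sums and then bootstrap to the perturbed sums via the Christ--Kiselev trick of splitting dyadically and summing geometric series, using that the Pr\"ufer phase differs from $n\theta$ by a term that is small in a suitable averaged sense. The key quantitative input is a weighted $L^2$ bound: one shows $\|\CM a\|$ is finite off a set $S_\epsilon$ whose size is controlled, and then the weight $n^\gamma$ translates — via a Chebyshev/covering argument at dyadic scales $n \sim 2^j$, where one has roughly $2^{j(1-\gamma)}$ worth of ``bad frequencies'' to excise at scale $2^{-j}$ — into the bound $\dim_H S \le 1 - \gamma$. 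Concretely, at scale $2^j$ the partial sums $\sum_{n \le 2^j} a_n e^{in\theta}$ fail to be uniformly bounded only on a set coverable by $\lesssim 2^{j(1-\gamma)}$ intervals of length $2^{-j}$, because $\sum_{2^{j-1} < n \le 2^j} n^\gamma |a_n|^2 \to 0$ forces the $L^2$ mass of these partial sums to be $o(2^{-j\gamma})$ on each such scale; intersecting over all large $j$ yields a set of the claimed Hausdorff dimension.

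The main obstacle — and where the real work lies — is controlling the nonlinear feedback between the Pr\"ufer radius and the Pr\"ufer angle: the oscillatory sum that must converge has a phase that itself depends on the past history of the dynamics, so one cannot simply quote a Carleson-type theorem for pure Fourier series. I would handle this by a careful iteration: show that on the good set the Pr\"ufer phase stays within $O(1)$ of the free phase $n\theta$ (plus a convergent correction), feed that back to conclude the oscillatory sums converge, and close the loop. This is precisely the point where the Christ--Kiselev maximal-operator machinery, combined with the $\ell^2$ (rather than merely $\ell^1$) nature of the hypothesis, is essential: the quadratic terms $O(|\alpha_n|^2)$ are summable unconditionally, and only the \emph{linear} oscillatory part needs the exceptional set, whose dimension is then pinned down by the weight $\gamma$. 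Finally, the passage from $\sup_n\|T_n(z)\| < \infty$ off $S$ to the statement that $\mu_{\mathrm{sing}}$ lives on $S$ is the standard subordinacy/boundedness principle for OPUC (bounded Szeg\H{o} matrices $\Rightarrow$ the Carath\'eodory function has finite nontangential boundary values $\Rightarrow$ absolutely continuous spectrum locally), which is already available in Simon's monographs and requires no new ideas.
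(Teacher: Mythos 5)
Your proposal takes a genuinely different route from the paper, and it has two substantial gaps.

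\textbf{Different route.} You outline a Christ--Kiselev-style argument: a maximal-function estimate for (near-)Fourier sums, a Chebyshev estimate at each dyadic scale, a covering of the superlevel sets by short intervals, and a direct construction of the exceptional set $S$ as a limsup of these covers. The paper explicitly sidesteps the Christ--Kiselev machinery (see their Remark (d): ``Christ--Kiselev use methods for which it is less clear what the appropriate OPUC analog might be'') and instead follows Remling's strategy. In the paper there is no maximal function and no direct cover of $S$. Rather, the argument is by contradiction via Frostman's lemma: if $\dim_\mathrm{H}(S) > 1-\gamma$ one may place a finite Borel measure $\nu$ on $S$ that is uniformly $D$-H\"older with $D\in(1-\gamma,1)$; the paper's Theorem~\ref{t2.1} gives the ``WKB transform'' estimate $\sup_\beta \sum_{s=0}^L |\widehat{f\,d\nu}(s)|^2 \lesssim (L+1)^{1-D}\int |f|^2\,d\nu$ against this $D$-dimensional measure, and the iteration scheme of Section~\ref{sec.4} converts it into $\nu$-a.e.\ boundedness of $R_n(\eta,\beta)$, contradicting $\supp\nu\subset S$. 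This way the weight $n^\gamma$ enters only through the interplay of the $(L+1)^{1-D}$ factor with the dyadic-block $\ell^2$ estimates of Lemma~\ref{l.verification}; no covering of $S$ is ever produced.

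\textbf{Gap 1: the covering argument.} Your assertion that, at scale $2^j$, the block sum $\sum_{2^{j-1}<n\le 2^j}\alpha_n e^{in\theta}$ exceeds a fixed level only on a set ``coverable by $\lesssim 2^{j(1-\gamma)}$ intervals of length $2^{-j}$'' does not follow from the $L^2$ estimate. The Chebyshev bound controls the Lebesgue \emph{measure} of the superlevel set by $o(2^{-j\gamma})$, but a set of small measure need not admit a cover by few short intervals: the superlevel set of a trigonometric polynomial of degree $2^j$ can a priori have $\sim 2^j$ connected components, so covering it by intervals of length $2^{-j}$ may require $\sim 2^j$ intervals, giving the useless Hausdorff-content bound $s\ge 1$ rather than $s\ge 1-\gamma$. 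Some additional structure (e.g.\ a Bernstein-type inequality with good $L^\infty$ control, or a genuinely different mechanism) is needed, and it is not supplied. The Frostman-measure route avoids this entirely.

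\textbf{Gap 2: handling the Pr\"ufer phase.} You propose to ``show that on the good set the Pr\"ufer phase stays within $O(1)$ of the free phase $n\theta$ (plus a convergent correction), feed that back, and close the loop.'' The paper neither proves nor uses such a statement, and it is not clear it holds: the Pr\"ufer angle satisfies \eqref{eq:ThetaPrufer}, and while the OPUC phase equation (crucially, unlike Schr\"odinger, see Remark~\ref{r.opucsodiff}) has no non-oscillating term linear in $\alpha$, the oscillating linear term can still accumulate drift, and controlling it is essentially the whole difficulty. The paper handles the history-dependent phase through a Remling-style iteration (Lemma~\ref{l.mainlemma}): one isolates the slowly-varying part $\tau(n,\eta,\beta)=2\theta_n-\tfrac1\eta\sum|\alpha_j|^2$, sums by parts to trade $e^{i\tau}$ for its increments, uses \eqref{e.absofdifftau} to bound those increments by $|\alpha|$, and iterates over finer and finer partitions $\{y_j\}$ of each block $[x_{n-1},x_n)$. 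The number of iterations needed is an $n$-independent integer controlled by \eqref{e.goal2new}. Nothing in that argument requires the phase to remain close to $n\eta$; it merely requires the increments of $\tau$ to be $O(|\alpha_k|)$ and the WKB transform estimate to hold against $\nu$ on every subinterval of the partition.

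The final paragraph of your proposal (boundedness of Szeg\H{o} matrices $\Rightarrow$ $\mu_\mathrm{sing}$ lives on the complement) is correct and is indeed the standard principle the paper invokes.
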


\begin{remark}
(a) A useful perspective on this result is that it establishes a natural interpolation between the known endpoint results. The endpoint $0$ corresponds to the famous Szeg\H{o} theorem: formulated as \cite[Theorem~1.8.6]{S11}, Verblunsky's form of Szeg\H{o}'s theorem reads
\begin{equation}\label{e.szthm1}
\prod_{n = 0}^\infty (1 - |\alpha_n|^2) = \exp \left( \int \log (w(\theta)) \, \frac{d\theta}{2\pi} \right),
\end{equation}
where $w$ denotes the Radon-Nikodym derivative of the absolutely continuous part of $\mu$ with respect to the normalized Lebesgue measure on the unit circle. The identity \eqref{e.szthm1} implies in particular that
\begin{equation}\label{e.szthm2}
\sum_{n = 0}^\infty |\alpha_n|^2 < \infty \; \Leftrightarrow \; \int \log (w(\theta)) \, \frac{d\theta}{2\pi} > - \infty.
\end{equation}
Observe that the singular part of $\mu$ is entirely unrestricted by \eqref{e.szthm2} (other than by having weight less than $1$), and hence one can have measures $\mu$ whose Verblunsky coefficients satisfy \eqref{e.vcass} with $\gamma = 0$ and singular part of Hausdorff dimension $1 = 1 - \gamma$.

On the other hand, if $\mu$ is such that its Verblunsky coefficients satisfy \eqref{e.vcass} with $\gamma = 1$, then by Golinskii-Ibragimov \cite{GI71} (cf.~\cite[Theorem~6.1.2]{S04}; see also \cite{D06, DK04, S03} for extensions), $\mu_\mathrm{sing}$ vanishes, and in particular it is supported by a set of dimension $0 = 1 - \gamma$.

(b) The upper bound $1 - \gamma$ for the Hausdorff dimension of a support of $\mu_\mathrm{sing}$ cannot be improved (as proved by Denisov and Kupin in \cite{DK05} and presented in detail for the OPUC case at hand in \cite[Section~2.12]{S04}), and hence the result is optimal.

(c) Simon's conjecture \cite[Conjecture~10.12.10]{S05} is motivated by existing results in the Schr\"odinger operator literature and the well-known analogy between the OPUC theory and the theory of discrete one-dimensional Schr\"odinger operators (and more generally Jacobi matrices, which are standard OPRL models). Here, OPUC is short for \emph{orthogonal polynomials on the unit circle} and OPRL is short for \emph{orthogonal polynomials on the real line}. Specifically, there are results due to Remling \cite{r, r2} and Christ-Kiselev \cite{ck} in the Schr\"odinger operator literature that establish analogous dimension bounds for the singular part of a spectral measure under suitable assumptions on the potential of the Schr\"odinger operator in question.

(d) While the analogy between OPUC and OPRL is well documented, there is no canonical way of transforming a result and proof on one side into a result and proof on the other side. However, many of the tools used in a proof usually have analogs on the other side and one can attempt to carry out the same proof strategy. This works sometimes easily, sometimes with major effort, and sometimes not at all. Indeed, the second part of Simon's monograph, \cite{S05}, is written in this spirit: it works out a plethora of OPUC analogs of existing OPRL results. The conjecture at hand is a case where it does not work easily, and the conjecture is really asking whether it can be made to work with the appropriate amount of effort.

(d) The issue with the existing Schr\"odinger/OPRL results \cite{ck, r, r2} is that Remling works under a stronger assumption and that Christ-Kiselev use methods for which it is less clear what the appropriate OPUC analog might be. Remling assumes a pointwise estimate -- the suitable analog replacing \eqref{e.vcass} would be
\begin{equation}\label{e.vcass.pw}
|\alpha_n| \le \frac{C}{(1 + n)^\delta}.
\end{equation}
Here the known endpoints are $\delta = \frac12$ and $\delta = 1$, and the optimal dimension estimate reads $\mathrm{dim}_\mathrm{H} (S) \le 2 (1-\delta)$. However, Remling's proof uses the stronger assumption \eqref{e.vcass.pw} in a crucial way and it is not clear if his approach can be carried out under the weaker assumption \eqref{e.vcass}. On the other hand, Remling's approach employs Pr\"ufer variables, which is a concept that exists in both the OPRL setting and the OPUC setting, and Simon explicitly writes in \cite{S05} that he expects that ``using Pr\"ufer variables, one should be able to prove'' \cite[Conjecture~10.12.10]{S05}. Consequently, we have chosen to use Pr\"ufer variables in our attempt to prove \cite[Conjecture~10.12.10]{S05} and the general structure of our proof of Theorem~\ref{t.main} is inspired by Remling's work \cite{r, r2}. Of course we need to address the difficulties arising from not having a pointwise estimate of the form \eqref{e.vcass.pw}.

(e) What comes to our rescue in the OPUC case is the fact that a certain non-oscillatory behavior in the Pr\"ufer equations for the Schr\"odinger operator case is absent from the OPUC Pr\"ufer equations; compare Remark~\ref{r.opucsodiff} below. In particular, it remains an interesting problem to determine whether the Christ-Kiselev result can be obtained via Remling's method.
\end{remark}

This paper is organized as follows. We first recall the concept of Pr\"{u}fer variables in Section~\ref{sec.2}. Section~\ref{sec.3} establishes norm estimates for what may be called the WKB transform: a generalization of the Fourier transform involving Pr\"ufer variables. These estimates are then used in Section~\ref{sec.4} to carry out an iteration procedure that reduces the desired finiteness statement to one on a sufficiently refined partition of $\Z_+$, which can then be verified. Given this work, we are then able to quickly deduce Theorem~\ref{t.main} in Section~\ref{sec.5}.

\section{Pr\"ufer Variables}\label{sec.2}

In this section we recall the concept of Pr\"ufer variables. We refer the reader to \cite[Section 10.12]{S05} for the statements given below (see especially \cite[Theorems~10.12.1 and 10.12.3]{S05}) and for more information.

Let $\{\alpha_n\}_{n \in \Z_+}$ be the Verblunsky coefficients of a nontrivial probability measure $d\mu$ on $\partial \D$. As mentioned above, the $\alpha_n$'s give rise to a sequence $\{\Phi_n(z)\}_{n \in \Z_+}$ of monic polynomials (via the Szeg\H{o} recurrence \eqref{eq01}) that are orthogonal with respect to $d\mu$. For $\beta \in [0,2\pi)$, we also consider the monic polynomials $\{\Phi_n(z,\beta)\}_{n\in\Z_+}$ that are associated in the same way with the Verblunsky
coefficients $\{ e^{i\beta} \alpha_n \}_{n\in\Z_+}$. The parameter $\beta$ corresponds to a variation of the initial condition for the Szeg\H{o} recursion. In particular, the orthogonal polynomials of both the first and second kind arise for suitable choices of $\beta$, and hence we can bound the Szeg\H{o} matrices once we have bounds for these two relevant values of $\beta$.

Let $\eta \in [0,2\pi)$. Define the Pr\"ufer variables $R_n, \theta_n$ by
\begin{equation}\label{e.pruferdef}
\Phi_n(e^{i\eta},\beta) = R_n(\eta,\beta) \exp \left[ i(n \eta + \theta_n(\eta,\beta))
\right],
\end{equation}
where $R_n > 0$, $\theta_n \in [0,2\pi)$, and $|\theta_{n+1} - \theta_n| < \pi$. Thus, our goal of bounding the $\beta$-dependent orthogonal polynomials can be accomplished by bounding the $\beta$-dependent Pr\"ufer radius $R_n(\eta,\beta)$ as $n \to \infty$ outside a set of $\eta$'s that has sufficiently small Hausdorff dimension.

The Pr\"ufer variables obey the following pair of equations:
\begin{align}
\label{eq:RPrufer} \frac{R_{n+1}^2(\eta,\beta)}{R_n^2(\eta,\beta)} & = 1 + | \alpha_n |^2 - 2 \Re \left(\alpha_n e^{i[(n+1)\eta + \beta + 2 \theta_n(\eta,\beta)]} \right), \\
\label{eq:ThetaPrufer} e^{-i(\theta_{n+1}(\eta,\beta) - \theta_n(\eta,\beta))} & = \frac{1 - \alpha_n e^{i[(n+1)\eta + \beta + 2 \theta_n(\eta,\beta)]}}{\left[1 + | \alpha_n |^2 - 2 \Re \left(\alpha_n e^{i[(n+1)\eta + \beta + 2 \theta_n(\eta,\beta)]} \right)\right]^{1/2}}.
\end{align}
We also define $r_n(\eta,\beta) = |\varphi_n(\eta,\beta)|$.

\begin{remark}\label{r.opucsodiff}
For the Schr\"odinger differential equation
$$
-y''(x) + V(x) y(x) = E y(x), \quad x > 0
$$
with $E = k^2 > 0$, the Pr\"ufer variables defined by
$$
\begin{pmatrix} \frac{1}{k} y'(x) \\ y(x) \end{pmatrix} = R(x) \begin{pmatrix} \cos \frac{\psi(x)}{2} \\ \sin \frac{\psi(x)}{2} \end{pmatrix}
$$
obey the pair of equations
\begin{align*}
\frac{d}{dx} \ln R (x) & = \frac{V(x)}{2k} \sin \psi(x) , \\
\frac{d}{dx} \psi (x) & = 2 k - \frac{V(x)}{k} + \frac{V(x)}{k} \cos \psi (x).
\end{align*}
Note in particular that in the Schr\"odinger case, the derivative of the Pr\"ufer angle contains a non-oscillating term that is linear in the potential $V$, whereas \eqref{eq:ThetaPrufer} contains no non-oscillating term that is linear in $\alpha$.
\end{remark}

When $\{ \alpha_n \} \in \ell^2$, we have
\begin{equation}\label{fs}
r_n(\eta,\beta) \sim R_n(\eta,\beta) \sim \exp \left( - \sum_{j=0}^{n-1} \Re (\alpha_j e^{i[(j+1)\eta + \beta + 2 \theta_j(\eta,\beta)]}) \right),
\end{equation}
where here and in the remainder of this paper we use the following notation: we write $f \lesssim g$ (resp., $f \gtrsim g$) for non-negative quantities $f,g$ if there is a constant $C > 0$ that is uniform in all parameters that are variable in the situation at hand such that $f \le C g$ (resp., $f \ge C g$). If we have both $f \lesssim g$ and $f \gtrsim g$, we write $f \sim g$.


\section{Norm Estimates for the WKB Transform}\label{sec.3}

Let $\nu$ denote a finite Borel measure whose support is contained in some compact subset of $(0,2\pi]$. Such a $\nu$ is said to be (uniformly) $D$-dimensional, with $D\in [0,1]$, if $\nu(I)\le C\left|I\right|^D$ for all intervals $I\subset (0,2\pi]$. Let us write
\begin{align}
\psi(k,\eta,\beta) & = (k+1) \eta + \beta + 2 \theta_n (\eta,\beta), \label{e.psidef} \\
\omega(s,\eta,\beta) & = (s+1)\eta +\beta +\frac{1}{\eta}\sum_{k=0}^{s-1} |\alpha_k|^2. \label{e.omegadef}
\end{align}

\begin{theorem}{\label{t2.1}}
Assume that \eqref{e.vcass} holds. Suppose that $\nu$ is a finite Borel measure on $(0,2\pi)$ with the following two properties:
\begin{itemize}

\item[{\rm (i)}] There is a $\delta > 0$ such that $\nu$ is supported by $(\delta,2\pi-\delta)$.

\item[{\rm (ii)}] There is a $D \in (1-\gamma,1)$ such that $\nu$ is uniformly $D$-H\"older continuous, that is, $\nu(I) \lesssim |I|^D$ for every interval $I \subseteq (0,2\pi)$.

\end{itemize}
Then
\begin{equation}\label{e.WKBTestimate}
\sup_\beta \sum_{s=0}^{L} \,  \left| \int f(\eta) e^{i\omega(s,\eta,\beta)} \, d\nu(\eta) \right|^2 \lesssim (L+1)^{1-D} \int  \left| f(\eta) \right|^2 \, d\nu(\eta),
\end{equation}
for all $f\in L^2((0,2\pi),d\nu)$ and $L \in \Z_+$.
\end{theorem}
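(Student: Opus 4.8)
The plan is to prove \eqref{e.WKBTestimate} by a $TT^*$ (duality) argument together with a dyadic decomposition in the frequency variable $s$. Writing $g_s = \int f(\eta)e^{i\omega(s,\eta,\beta)}\,d\nu(\eta)$, the quantity on the left is $\|Tf\|_{\ell^2(\{0,\dots,L\})}^2$ for the operator $T\colon L^2(d\nu)\to\ell^2$ with kernel $e^{i\omega(s,\eta,\beta)}$. By duality it suffices to bound $\|T^*\|^2 = \|TT^*\|$, and $TT^*$ acts on $\ell^2(\{0,\dots,L\})$ with matrix entries $(TT^*)_{s,t} = \int e^{i(\omega(s,\eta,\beta)-\omega(t,\eta,\beta))}\,d\nu(\eta) = \int e^{i(s-t)\eta}\,e^{\frac{i}{\eta}\sum_{k=t}^{s-1}|\alpha_k|^2}\,d\nu(\eta)$ for $s\ge t$ (and the conjugate for $s<t$), since the $\beta$-dependence and the linear-in-$\eta$ pieces cancel in the difference. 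The essential point is that these entries depend on $s-t$ through an oscillatory factor $e^{i(s-t)\eta}$ modulated by the slowly varying unimodular factor coming from \eqref{e.omegadef}; because of property (i), $1/\eta$ is bounded on $\supp\nu$, so the modulating phase $\frac{1}{\eta}\sum_{k=t}^{s-1}|\alpha_k|^2$ is a bounded, slowly increasing function, and \eqref{e.vcass} (even just $\{\alpha_n\}\in\ell^2$) guarantees that consecutive increments are summable.

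Next I would estimate $|(TT^*)_{s,t}|$ via the $D$-Hölder continuity of $\nu$. The key harmonic-analytic input is a decay estimate: for a uniformly $D$-dimensional measure $\nu$ supported away from the endpoints, $\left|\int e^{im\eta}h(\eta)\,d\nu(\eta)\right| \lesssim |m|^{-(1-D)/\,?}$ — more precisely one uses the standard fact (see the Strichartz-type estimates underlying this circle of ideas, as in Remling \cite{r,r2}) that such integrals, suitably averaged, obey $\sum_{|m|\le M}\left|\int e^{im\eta}\,d\nu\right|^2 \lesssim M^{1-D}\nu((0,2\pi))$, i.e. the Fourier coefficients of a $D$-dimensional measure are square-summable with the stated rate. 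Incorporating the slowly varying modulation requires an Abel summation / telescoping argument: write $e^{\frac{i}{\eta}\sum_{k=t}^{s-1}|\alpha_k|^2}$ as a product over dyadic blocks, use $|e^{ia}-e^{ib}|\le|a-b|$ to control the error from freezing the modulation on each block by $\sum_k|\alpha_k|^2/\delta$, and sum the resulting geometric-type series. This reduces matters to the pure exponential sum, to which the $D$-dimensional Fourier-decay bound applies.

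Then I would assemble the pieces via a Schur test on $TT^*$: bounding $\sum_{t=0}^{L}|(TT^*)_{s,t}|$ uniformly in $s$. Splitting the sum over $|s-t|$ dyadically, on the block $|s-t|\sim 2^j$ one has roughly $2^j$ terms, each of size $\lesssim$ (decay rate at frequency $2^j$) $+$ (modulation error $\lesssim \sum_{k\in\text{block}}|\alpha_k|^2$). The first contribution, summed over $j$ with $2^j\le L$, produces exactly the $(L+1)^{1-D}$ factor — this is where the exponent $1-D$, and hence the constraint $D>1-\gamma$ tying it to \eqref{e.vcass}, enters. The second contribution is controlled by $\sum_k|\alpha_k|^2<\infty$, which is finite and uniform. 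The uniformity in $\beta$ is automatic since $\beta$ disappeared in forming $TT^*$. Finally $\|TT^*\|\le\sup_s\sum_t|(TT^*)_{s,t}|\lesssim (L+1)^{1-D}$, which gives \eqref{e.WKBTestimate} after multiplying by $\|f\|_{L^2(d\nu)}^2$.

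The main obstacle I anticipate is making the $D$-dimensional Fourier decay estimate quantitative and robust enough to survive the modulation. A single integral $\int e^{im\eta}\,d\nu$ need not decay pointwise in $m$ for a general $D$-dimensional measure — only the averaged (square-function) bound holds — so one cannot naively apply Schur's test entry-by-entry with pointwise decay. The correct route is likely to keep the square-function structure intact: rather than Schur-testing $TT^*$, directly expand $\sum_s|g_s|^2 = \iint f(\eta)\overline{f(\zeta)}\big(\sum_s e^{i(\omega(s,\eta,\beta)-\omega(s,\zeta,\beta))}\big)\,d\nu(\eta)\,d\nu(\zeta)$, bound the inner geometric-type sum over $s\le L$ by $\min(L+1,\,|\eta-\zeta|^{-1}\cdot(\text{correction}))$ using that $\omega(s,\eta)-\omega(s,\zeta)\approx s(\eta-\zeta)$ plus a bounded modulation, and then invoke the $D$-Hölder continuity of $\nu$ to integrate the kernel $\min(L+1,|\eta-\zeta|^{-1})$ against $d\nu(\eta)\,d\nu(\zeta)$ — a computation that yields precisely $(L+1)^{1-D}$. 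Handling the modulation term $\frac{1}{\eta}\sum|\alpha_k|^2$ carefully in this kernel bound — ensuring it contributes only an $O(1)$ multiplicative constant via \eqref{e.vcass} and property (i) — is the delicate part, and is presumably where the proof in the next sections does its real work.
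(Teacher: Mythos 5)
Your proposal, after the self-correction in the final paragraph, is essentially correct and arrives at a genuinely cleaner argument than the paper's. You rightly discard the entry-by-entry Schur test on $TT^*$ (a general uniformly $D$-dimensional measure has no pointwise Fourier decay), and the pivoted route — square out to a double integral, bound the kernel $K(\eta,\eta') = \sum_{s=0}^L e^{i(\omega(s,\eta,\beta)-\omega(s,\eta',\beta))}$ by $\min(L+1, |\eta-\eta'|^{-1})$, then integrate against $d\nu\otimes d\nu$ using $D$-H\"older continuity — does work. The key kernel bound goes through by a single Abel summation: with $a_s = e^{i(\frac{1}{\eta}-\frac{1}{\eta'})\sum_{k<s}|\alpha_k|^2}$ and $b_s = e^{i(s+1)(\eta-\eta')}$, one has $|a_{s+1}-a_s|\lesssim |\eta-\eta'||\alpha_s|^2$ (since $1/\eta$ is Lipschitz on $\supp\nu$ by (i)) and $|\sum_j b_j|\lesssim \min(L+1,|\eta-\eta'|^{-1})$ (again by (i)), so the modulation contributes only the uniform factor $1+\frac{2\pi}{\delta^2}\|\alpha\|_{\ell^2}^2$, using merely $\{\alpha_n\}\in\ell^2$ — you do not actually need $D>1-\gamma$ here, only finiteness of $\sum|\alpha_n|^2$, which already follows from \eqref{e.vcass}. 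The dyadic integration of $\min(L+1,|\eta-\eta'|^{-1})$ against a $D$-H\"older measure then yields $(L+1)^{1-D}$ as you say. The paper organizes this same idea quite differently: it inserts a Gaussian weight $e^{-s^2/L^2}$ and extends the sum to all of $\Z$, handles the pure oscillatory part by the Poisson summation formula, Taylor-expands $e^{i(\frac{1}{\eta}-\frac{1}{\eta'})|\alpha_s|^2}-1$ into a full power series and treats each order separately, performs a second summation by parts on the inner $j$-sum, and splits the $(\eta,\eta')$ integration at the threshold $|\eta-\eta'| = L^{\gamma/D-1}$ with a more careful weighted Cauchy--Schwarz on the $|\alpha_s|^2$-sum to extract powers of $L$. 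Your direct Abel summation plus geometric-sum bound is considerably more economical and avoids both Poisson summation and the infinite Taylor expansion, at no apparent cost. One small inaccuracy: you attribute the entry of the constraint $D>1-\gamma$ to this estimate, but it plays no role in Theorem~\ref{t2.1} by either route; it is needed later (in Lemma~\ref{l.verification} and the iteration of Section~\ref{sec.4}).
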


\begin{remark}
(a) In analogy with \cite{r, r2} and many earlier works, one may call the quantity
$$
\int f(\eta) e^{i\omega(s,\eta,\beta)} \, d\nu(\eta)
$$
on the left-hand side of \eqref{e.WKBTestimate} the \emph{WKB transform} of $f$. Referring to the definition of $\omega(s,\eta,\beta)$ in \eqref{e.omegadef}, we note that it is a generalized version of the Fourier transform of $f$.
\\[2mm]
(b) In the proof of Theorem~\ref{t2.1}, we will use the summation by parts formulae,
\begin{equation}\label{e.sumbyparts}
\sum_{j = J_\ell}^{J_r} a_j (b_{j+1} - b_j) = (a_{J_r} b_{J_r + 1} - a_{J_\ell} b_{J_\ell}) - \sum_{j = J_\ell + 1}^{J_r} b_j (a_{j} - a_{j - 1}),
\end{equation}
and
\begin{equation}\label{e.sumbyparts2}
\sum_{j = J_\ell}^{J_r} a_j b_j = a_{J_{\ell}} \sum_{j = J_\ell}^{J_r} b_j + \sum_{j = J_\ell}^{J_r-1} (a_{j+1} - a_j) \sum_{k=j+1}^{J_r} b_k.
\end{equation}
If all quantities have a limit as $J_r \to \infty$ (resp., $J_\ell \to -\infty$), the formulae extend to the case $J_r = \infty$ (resp., $J_\ell = -\infty$).
\end{remark}

\begin{proof}[Proof of Theorem~\ref{t2.1}]
The case $L=0$ follows immediately from the Cauchy-Schwarz inequality and finiteness of $\nu$. We can therefore restrict our attention to the case
\begin{equation}{\label{e.remainingcase}}
L\ge 1.
\end{equation}
We have
\begin{align}
\sum_{s=0}^L & \left|\int f(\eta) e^{i\omega(s,\eta,\beta)} \, d\nu(\eta) \right|^2 \notag\\
& \le \sum_{s=0}^{L} e^{\frac{L^2 - s^2}{L^2}} \left|\int f(\eta) e^{i\omega(s,\eta,\beta)} \, d\nu(\eta) \right|^2 \text{ since } s\le L \notag \\
& \le \sum_{s=-\infty}^{\infty} e^{\frac{L^2 - s^2}{L^2}} \left|\int f(\eta) e^{i\omega(s,\eta,\beta)} \, d\nu(\eta) \right|^2 \text{ since } e^{-x}>0 \notag \\
& = e \sum_{s=-\infty}^{\infty} e^{-\frac{s^2}{L^2}} \left|\int f(\eta) e^{i\omega(s,\eta,\beta)} \, d\nu(\eta) \right|^2 \notag \\
& = e \iint f(\eta) \overline{f(\eta')} \sum_{s=-\infty}^{\infty} e^{-\frac{s^2}{L^2}} e^{i(\omega(s,\eta,\beta)-\omega(s,\eta',\beta))} \, d\nu(\eta) \, d\nu(\eta') \notag \\
& = e \iint \, d\nu(\eta) \, d\nu(\eta') f(\eta) \overline{f(\eta')} \times \notag \\
& \qquad \qquad \times \sum_{s=-\infty}^{\infty} e^{-\frac{s^2}{L^2}} e^{i((s+1)(\eta-\eta') + (\frac{1}{\eta} - \frac{1}{\eta'}) \sum_{k=0}^{s-1} |\alpha_k|^2)}   \notag \\
& = e \iint \, d\nu(\eta) \, d\nu(\eta')f(\eta) \overline{f(\eta')} e^{i(\eta-\eta')} \times \notag \\
& \qquad \qquad \times \sum_{s=-\infty}^{\infty} e^{-\frac{s^2}{L^2}} e^{i(s(\eta-\eta') + (\frac{1}{\eta} - \frac{1}{\eta'}) \sum_{k=0}^{s-1} |\alpha_k|^2) } \label{estimate5}
\end{align}
where we set
\begin{equation}\label{e.negativealphas}
\alpha_n = 0 \text{ for } n \in \Z_-.
\end{equation}

In the $s$-summation, we apply \eqref{e.sumbyparts2} with
$$
a_j = e^{i(\frac{1}{\eta}-\frac{1}{\eta'}) \sum_{k=0}^{j-1} |\alpha_k|^2}, \quad b_j = e^{-\frac{j^2}{L^2}} e^{ij(\eta-\eta')}, \quad J_\ell = -\infty, \quad J_r = \infty.
$$
and obtain
\begin{align}
\sum_{s=-\infty}^{\infty} & e^{-\frac{s^2}{L^2}} e^{i(s(\eta-\eta') + (\frac{1}{\eta}- \frac{1}{\eta'} ) \sum_{k=0}^{s-1} |\alpha_k|^2) } \notag \\
& = \sum_{s=-\infty}^{\infty} e^{-\frac{s^2}{L^2}} e^{is(\eta-\eta')} \label{estimate6a}\\
& \quad + \sum_{s=-\infty}^{\infty} \big(e^{i(\frac{1}{\eta}  - \frac{1}{\eta'}) \sum_{k=0}^{s} | \alpha_k|^2 }  - e^{i(\frac{1}{\eta} - \frac{1}{\eta'}) \sum_{k=0}^{s-1} |\alpha_k|^2 } \big) \sum_{j=s+1}^{\infty} e^{-\frac{j^2}{L^2}} e^{ij(\eta-\eta')}.  \label{estimate6b}
\end{align}
The first term, \eqref{estimate6a}, can be evaluated as follows. By the Poisson summation formula, we have
\[
\sum_{s=-\infty}^{\infty} e^{-\frac{s^2}{L^2}} e^{is(\eta-\eta')} = \sum_{k=-\infty}^{\infty} c_k,
\]
where
\begin{align*}
c_k & = \int_{-\infty}^{\infty} e^{-\frac{x^2}{L^2}} e^{ix(\eta-\eta')-2\pi i kx} \, dx \\
    & = \int_{-\infty}^{\infty} e^{-\frac{x^2}{L^2}+ix((\eta-\eta')-2\pi k)} \, dx\\
    & = \int_{-\infty}^{\infty} e^{-\frac{1}{L^2}(x^2 + ixL^2((\eta'-\eta)+2\pi k))} \, dx\\
    & = e^{-\frac{L^2((\eta'-\eta)+2\pi k)^2}{4}} \int_{-\infty}^{\infty} e^{-\frac{1}{L^2}(x + i\frac{L^2((\eta'-\eta)+2\pi k)}{2})^2} \, dx  \\
    & =  e^{-\frac{L^2((\eta'-\eta)+2\pi k)^2}{4}} \int_{-\infty}^{\infty} e^{-\frac{u^2}{L^2}} du\\
    & = C L e^{-\frac{L^2((\eta'-\eta)+2\pi k)^2}{4}}.
\end{align*}
Thus, we have
\[
\eqref{estimate6a} = \sum_{s=-\infty}^{\infty} e^{-\frac{s^2}{L^2}} e^{is(\eta-\eta')} = C L \sum_{k=-\infty}^{\infty} e^{-\frac{L^2 ((\eta'-\eta) + 2 \pi k)^2}{4}}.
\]
After inserting this into \eqref{estimate5}, we get
\begin{align*}
C L \iint & | f(\eta)\overline{f(\eta')}|  \sum_{k=-\infty}^{\infty} e^{-\frac{L^2((\eta' - \eta) + 2 \pi k)^2}{4}} \,  d\nu(\eta) \, d\nu(\eta')  \\
& \le C L \int |f(\eta)|^2 \int \sum_{k=-\infty}^{\infty} e^{-\frac{L^2 (\eta' - \eta + 2 \pi k)^2}{4}} \, d\nu(\eta') \, d\nu(\eta)
\end{align*}
Now,
\begin{align*}
\int \sum_{k=-\infty}^{\infty} & e^{-\frac{L^2(\eta'-\eta+2\pi k)^2}{4}} \, d\nu(\eta') \\
& \lesssim \sum_{k=0}^{\infty} \sum_{n=0}^{\lfloor 2 \pi L \rfloor - 1} \int_{\frac{n}{L} \le |\eta' - \eta| \le \frac{n+1}{L}} e^{-\frac{L^2 (\eta' - \eta + 2 k \pi)^2}{4}} \, d\nu(\eta') \\
& \qquad + \sum_{k=0}^{\infty} \int_{\frac{\lfloor 2\pi L\rfloor}{L} \le |\eta' - \eta| < 2\pi} e^{-\frac{L^2 (\eta' - \eta + 2 k \pi)^2}{4}} \, d\nu(\eta')\\
& \le L^{-D} \left( \sum_{k=0}^{\infty} \sum_{n=0}^{\lfloor 2 \pi L \rfloor - 1} e^{\frac{-(n + 2 k L \pi)^2}{4}} + \sum_{k=0}^{\infty} \sum_{n=0}^{\lfloor 2 \pi L \rfloor - 1} e^{\frac{-(-n - 1 + 2 k L \pi )^2}{4}}\right) \\
& \qquad + (2\pi- \frac{\lfloor 2\pi L \rfloor}{L})^{D}\left(\sum_{k=0}^{\infty}e^{-\frac{(\lfloor 2\pi L \rfloor + 2k \pi L)^2}{4}} + \sum_{k=0}^{\infty}e^{-\frac{(-2 \pi + 2k \pi L)^2}{4}} \right)\\
& \lesssim L^{-D} \left( \sum_{k=0}^{\infty} \sum_{n=0}^{\lfloor 2 \pi L \rfloor - 1} e^{-\frac{n^2}{4} - k^2 L^2 \pi^2} + C \sum_{m=0}^{\infty} e^{-\frac{m^2}{4}} \right.\\
& \qquad \left. + \sum_{k=0}^{\infty}e^{-\frac{(\lfloor 2\pi L \rfloor + 2k \pi L)^2}{4}} + \sum_{k=0}^{\infty}e^{-\frac{(-2 \pi + 2k \pi L)^2}{4}}\right)\\
& \lesssim  L^{-D},
\end{align*}
so, combining the three steps, we get the desired bound on the first ``half'' of \eqref{estimate5}.

It remains to study the second ``half,'' that is, the contribution coming from \eqref{estimate6b}. The basic strategy is as follows. We will break up the double integral over $\eta,\eta'$ into two parts, according to the size of $\left|\eta-\eta'\right|$. Since the $j$-summation from \eqref{estimate6b} is oscillatory, we expect that it decays as $\left|\eta-\eta'\right|$ grows. 
For small $\left|\eta-\eta'\right|$, there are no such cancelations, but then the $\eta,\eta'$ integration is only over a small region and we can use our assumptions on $\nu$.

Note that \eqref{estimate6b} can be written as
\begin{align}
\sum_{s=-\infty}^{\infty} &\big(e^{i(\frac{1}{\eta} - \frac{1}{\eta'}) \sum_{k=0}^{s} |\alpha_k|^2 } - e^{i(\frac{1}{\eta} - \frac{1}{\eta'}) \sum_{k=0}^{s-1} |\alpha_k|^2} \big) \sum_{j=s+1}^{\infty} e^{-\frac{j^2}{L^2}} e^{ij(\eta-\eta')}  \notag \\
& =  \sum_{s=0}^{\infty} \big(e^{i(\frac{1}{\eta} - \frac{1}{\eta'}) \sum_{k=0}^{s} |\alpha_k|^2 } - e^{i(\frac{1}{\eta}- \frac{1}{\eta'}) \sum_{k=0}^{s-1} |\alpha_k|^2}\big) \sum_{j=s+1}^{\infty} e^{-\frac{j^2}{L^2}} e^{ij(\eta-\eta')} \notag \\
& = \sum_{s=0}^{\infty} e^{i(\frac{1}{\eta}- \frac{1}{\eta'}) \sum_{k=0}^{s-1} |\alpha_k|^2 }\left(e^{i(\frac{1}{\eta}- \frac{1}{\eta'}) |\alpha_s|^2}-1\right)\sum_{j=s+1}^{\infty} e^{-\frac{j^2}{L^2}} e^{ij(\eta-\eta')} \notag \\
& = \sum_{s=0}^{\infty} ie^{i(\frac{1}{\eta}- \frac{1}{\eta'}) \sum_{k=0}^{s-1} |\alpha_k|^2 }\Big(\frac{1}{\eta}-\frac{1}{\eta'}\Big) |\alpha_s|^2 \sum_{j=s+1}^{\infty} e^{-\frac{j^2}{L^2}} e^{ij(\eta-\eta')} \label{estimate6b2} \\
\qquad & - \frac{1}{2!} \sum_{s=0}^{\infty} e^{i(\frac{1}{\eta}- \frac{1}{\eta'}) \sum_{k=0}^{s-1} |\alpha_k|^2 }\Big(\frac{1}{\eta}-\frac{1}{\eta'}\Big)^2 |\alpha_s|^4 \sum_{j=s+1}^{\infty} e^{-\frac{j^2}{L^2}} e^{ij(\eta-\eta')} \label{estimate6b3}\\
\qquad & - \frac{i}{3!} \sum_{s=0}^{\infty} e^{i(\frac{1}{\eta}- \frac{1}{\eta'}) \sum_{k=0}^{s-1} |\alpha_k|^2 }\Big(\frac{1}{\eta}-\frac{1}{\eta'}\Big)^3 |\alpha_s|^6 \sum_{j=s+1}^{\infty} e^{-\frac{j^2}{L^2}} e^{ij(\eta-\eta')} \label{estimate6b4}\\
\qquad & + \cdots \notag
\end{align}
where we used \eqref{e.negativealphas} in the first step and Taylor expansion in the last step. If one can get the desired bound after inserting \eqref{estimate6b2} into \eqref{estimate5}, one can also obtain $\frac{1}{2!}$ times the desired bound after inserting \eqref{estimate6b3} into \eqref{estimate5}. Similarly, one can also obtain $\frac{1}{3!}$ times the desired bound after inserting \eqref{estimate6b4} into \eqref{estimate5}, and so on. Then summing all these terms, and using that $\sum_{k=1}^{\infty} \frac{1}{k!} = e-1$, we get a constant times the desired bound. Thus, it suffices to consider the contribution coming from \eqref{estimate6b2}.

\medskip

We start by summing by parts in the $j$-summation:
\begin{align}
\sum_{j=s+1}^{\infty} & e^{-\frac{j^2}{L^2}} e^{ij(\eta-\eta')} \notag \\
& = - e^{-\frac{(s+1)^2}{L^2}} \frac{1 - e^{i(s+1)(\eta-\eta')}}{1 - e^{i(\eta-\eta')}} - \sum_{j=s+2}^{\infty}\left(e^{-\frac{j^2}{L^2}}-e^{-\frac{(j-1)^2}{L^2}}\right) \frac{1 - e^{ij(\eta-\eta')}}{1 - e^{i(\eta-\eta')}} \notag\\
& = - e^{-\frac{(s+1)^2}{L^2}} \frac{1 - e^{i(s+1)(\eta-\eta')}}{1 - e^{i(\eta-\eta')}} + e^{\frac{-(s+1)^2}{L^2}}\frac{1}{1 - e^{i(\eta-\eta')}} \notag \\
 & \qquad + \sum_{j=s+2}^{\infty}\left(e^{-\frac{j^2}{L^2}} -e^{-\frac{(j-1)^2}{L^2}}\right) \frac{ e^{ij(\eta-\eta')}}{1 - e^{i(\eta-\eta')}} \notag\\
& =  e^{-\frac{(s+1)^2}{L^2}} \frac{ e^{i(s+1)(\eta-\eta')}}{1 - e^{i(\eta-\eta')}} \label{estimate7a}\\
             &  \qquad + \sum_{j=s+2}^{\infty}\left(e^{-\frac{j^2}{L^2}}-e^{-\frac{(j-1)^2}{L^2}}\right) \frac{ e^{ij(\eta-\eta')}}{1 - e^{i(\eta-\eta')}}.\label{estimate7b}
\end{align}
Here we applied \eqref{e.sumbyparts} with
$$
a_j = e^{-\frac{j^2}{L^2}}, \quad b_j = \sum_{k = 0}^{j-1} e^{ik(\eta-\eta')} = \frac{1 - e^{ij(\eta-\eta')}}{1 - e^{i(\eta-\eta')}}, \quad J_\ell = s + 1, \quad J_r = \infty.
$$
Due to assumption (i) on $\nu$, we have
\begin{equation}\label{e.useof(i)}
\sup_j \left|\sum_{k=0}^{j}e^{ik(\eta-\eta')}\right| = \sup_j \left| \frac{1 - e^{ij(\eta-\eta')}}{1 - e^{i(\eta-\eta')}} \right| \lesssim \frac{1}{|\eta-\eta'|} \text{ for } \eta, \eta' \in \supp \nu.
\end{equation}

Let us first look at the contribution coming from \eqref{estimate7a}. So, insert \eqref{estimate7a} into \eqref{estimate6b2}, and then plug the resulting term into \eqref{estimate5}. We get
\begin{align}
 C \iint \, & d\nu(\eta) d\nu(\eta') f(\eta) \overline{f(\eta')}e^{i(\eta-\eta')} \sum_{s=0}^{\infty} e^{i(\frac{1}{\eta}- \frac{1}{\eta'} )  \sum_{k=0}^{s-1} |\alpha_k|^2}  \times \label{e.insert7a}\\
& \quad \quad \times \Big(\frac{1}{\eta}-\frac{1}{\eta'}\Big) |\alpha_s|^2 e^{-\frac{(s+1)^2}{L^2}} \frac{ e^{i(s+1)(\eta-\eta')}}{1 - e^{i(\eta-\eta')}}.\notag
\end{align}

Split the double integral in \eqref{e.insert7a} over $(\eta,\eta')$ into two parts over
\[
|\eta-\eta'| \le L^{\frac{\gamma}{D}-1} \textrm{ and }  |\eta-\eta'| > L^{\frac{\gamma}{D}-1}.
\]

Let us first consider the part over $|\eta-\eta'| \le L^{\frac{\gamma}{D}-1}$. In this case, using Taylor expansion, the absolute value of this expression can be estimated as follows,
\begin{align}
& \iint_{|\eta-\eta'| \le L^{\frac{\gamma}{D}-1}} \frac{| f(\eta) \overline{f(\eta')}|}{|\eta-\eta'|} \sum_{s=0}^{\infty}\left|\frac{1}{\eta} -\frac{1}{\eta'} \right| |\alpha_s|^2   e^{-\frac{(s+1)^2}{L^2}} \,  d\nu(\eta) \, d\nu(\eta') \notag \\
& = \iint_{|\eta-\eta'| \le L^{\frac{\gamma}{D}-1}} \frac{| f(\eta) \overline{f(\eta')}|}{|\eta\eta'|} \sum_{s=0}^{\infty} |\alpha_s|^2   e^{-\frac{(s+1)^2}{L^2}} \,  d\nu(\eta) \, d\nu(\eta')\notag \\
& \lesssim \iint_{|\eta-\eta'| \le L^{\frac{\gamma}{D}-1}} | f(\eta) \overline{f(\eta')}| ( 2 + \sum_{s=1}^{\infty} (1+s)^{-\frac{\gamma}{2}} |\alpha_{s+1}|^2 (1+s)^{\frac{\gamma}{2}}   e^{-\frac{(s+1)^2}{L^2}}) \,  d\nu(\eta) \, d\nu(\eta') \notag \\
& \lesssim \iint_{|\eta-\eta'| \le L^{\frac{\gamma}{D}-1}} | f(\eta) \overline{f(\eta')}| \,  d\nu(\eta) \, d\nu(\eta')  \notag \\
& \qquad + \iint_{|\eta-\eta'| \le L^{\frac{\gamma}{D}-1}} | f(\eta) \overline{f(\eta')}| \sum_{s=1}^{\infty} (1+s^2)^{-\frac{\gamma}{4}} |\alpha_{s+1}|^2 (1+s)^{\frac{\gamma}{2}}   e^{-\frac{(s+1)^2}{L^2}} \,  d\nu(\eta) \, d\nu(\eta') \notag \\
& \le \iint_{|\eta-\eta'| \le L^{\frac{\gamma}{D}-1}} | f(\eta) |^2 \,  d\nu(\eta) \, d\nu(\eta')  \notag \\
& \qquad + C\iint_{|\eta-\eta'| \le L^{\frac{\gamma}{D}-1}} | f(\eta) \overline{f(\eta')}| \left(\sum_{s=1}^{\infty} |\alpha_{s+1}|^4 (1+s)^{\gamma}\right)^{\frac{1}{2}} \times \notag\\
& \qquad \quad \times \left(\sum_{s=1}^{\infty} (1+s^2)^{-\frac{\gamma}{2}}e^{-\frac{2(s+1)^2}{L^2}} \right)^{\frac{1}{2}}\,  d\nu(\eta) \, d\nu(\eta') \notag \\
& \le \iint_{|\eta-\eta'| \le L^{\frac{\gamma}{D}-1}} | f(\eta) |^2 \,  d\nu(\eta) \, d\nu(\eta')  \notag \\
& \qquad + C\iint_{|\eta-\eta'| \le L^{\frac{\gamma}{D}-1}} | f(\eta) \overline{f(\eta')}| \left(\sum_{s=1}^{\infty} (1+s^2)^{-\frac{\gamma}{2}}e^{-\frac{2(s+1)^2}{L^2}} \right)^{\frac{1}{2}}\,  d\nu(\eta) \, d\nu(\eta'), \label{e.1+salpha}
\end{align}
where we used the Cauchy-Schwarz inequality and \eqref{e.vcass}.

We can estimate the summation in \eqref{e.1+salpha} by
\begin{align*}
\sum_{s=1}^{\infty} & (1+s^2)^{-\frac{\gamma}{2}}e^{-\frac{2(s+1)^2}{L^2}}\\
& = 2^{-\frac{\gamma}{2}}e^{-\frac{8}{L^2}} + \sum_{s=2}^{\infty} (1+s^2)^{-\frac{\gamma}{2}}e^{-\frac{2(s+1)^2}{L^2}} \\
& \le 1 +\sum_{s=2}^{\infty} s^{-\gamma} e^{-\frac{s^2}{L^2}} \\
& \le  1 + \int_1^{\infty} x^{-\gamma} e^{-\frac{x^2}{L^2}} \, dx \\
& \le 1 + \int_1^{L}  x^{-\gamma} e^{-\frac{x^2}{L^2}} \, dx + \int_L^{\infty} x^{-\gamma} e^{-\frac{x^2}{L^2}} \, dx \\
& \le  1 + \int_1^{L}  x^{-\gamma} \, dx + L^{-\gamma}\int_L^{\infty} e^{-\frac{x^2}{L^2}} \, dx  \\
& \le  1 +  \frac{1}{1-\gamma}x^{1-\gamma}|_{1}^{L} \, dx + L^{-\gamma}\int_L^{\infty} e^{-\frac{x^2}{L^2}} \, dx  \\
& \le 1 +  \frac{1}{1-\gamma} L^{1-\gamma}  + L^{-\gamma}\int_0^{\infty} e^{-\frac{x^2}{L^2}} \, dx \\
& =  1 +  \frac{1}{1-\gamma} L^{1-\gamma}  + L^{-\gamma} \frac{L\sqrt{\pi}}{2}  \\
& \le 1 + CL^{1-\gamma} \\
& \lesssim L^{1-\gamma}.
\end{align*}

Thus, \eqref{e.1+salpha} can be estimated by (a constant times)
\begin{align*}
L^{\frac{1-\gamma}{2}} & \iint_{|\eta-\eta'| \le L^{\frac{\gamma}{D}-1}} \frac{|f(\eta) \overline{f(\eta')}|}{|\eta\eta'|} \, d\nu(\eta) \, d\nu(\eta')\\
& \le L^{1-\gamma} \left(\iint_{|\eta-\eta'| \le L^{\frac{\alpha}{D}-1}} \frac{|f(\eta)|^2}{|\eta\eta'|} \, d\nu(\eta) \, d\nu(\eta') \right)^{\frac{1}{2}}\\
& \qquad \times \left(\iint_{|\eta-\eta'| \le L^{\frac{\gamma}{D}-1}} \frac{|f(\eta')|^2}{|\eta\eta'|} \, d\nu(\eta) \, d\nu(\eta') \right)^{\frac{1}{2}}\\
& = L^{1-\gamma} \iint_{|\eta-\eta'| \le L^{\frac{\gamma}{D}-1}} \frac{|f(\eta)|^2}{|\eta\eta'|} \, d\nu(\eta) \, d\nu(\eta') \\
& \le L^{1-\gamma} \int |f(\eta)|^2 \left( \int_{|\eta-\eta'| \le L^{\frac{\gamma}{D}-1}} d\nu(\eta') \right) \, d\nu(\eta)\\
& \lesssim L^{1-\gamma}L^{D(\frac{\gamma}{D}-1)} \int |f(\eta)|^2 \, d\nu(\eta) \\
& \le L^{1-D} \int |f(\eta)|^2 \, d\nu(\eta).
\end{align*}
Here we used the Cauchy-Schwarz inequality in the first step, symmetry in the second step, assumption (i) on $\nu$ in the third step, and assumption (ii) on $\nu$ in the fourth step.

Now we analyze the corresponding part of the double integral in \eqref{e.insert7a} over $|\eta-\eta'| > L^{\frac{\gamma}{D}-1}$. We have
\begin{align}
C & \iint_{|\eta-\eta'| > L^{\frac{\gamma}{D}-1}} \,  d\nu(\eta) d\nu(\eta') f(\eta) \overline{f(\eta')}e^{i(\eta-\eta')} \sum_{s=0}^{\infty} e^{i(\frac{1}{\eta}- \frac{1}{\eta'} )  \sum_{k=0}^{s-1} |\alpha_k|^2}  \times \notag\\
& \quad \quad \times \Big(\frac{1}{\eta}-\frac{1}{\eta'}\Big) |\alpha_s|^2 e^{-\frac{(s+1)^2}{L^2}} \frac{ e^{i(s+1)(\eta-\eta')}}{1 - e^{i(\eta-\eta')}} \notag\\
& = C \iint_{|\eta-\eta'| > L^{\frac{\gamma}{D}-1}} \,  d\nu(\eta) d\nu(\eta') \Big(\frac{1}{\eta}-\frac{1}{\eta'}\Big)f(\eta) \overline{f(\eta')} \frac{e^{i(\eta-\eta')}}{1 - e^{i(\eta-\eta')}}  \times\label{e.insert7aa}\\
& \quad \quad \times \sum_{s=0}^{\infty} e^{i(\frac{1}{\eta}- \frac{1}{\eta'} )  \sum_{k=0}^{s-1} |\alpha_k|^2}  |\alpha_s|^2 e^{-\frac{(s+1)^2}{L^2}}  e^{i(s+1)(\eta-\eta')}. \notag
\end{align}
Let
\[
a_s=e^{i(\frac{1}{\eta}- \frac{1}{\eta'} )   \sum_{k=0}^{s-1} |\alpha_k|^2}|\alpha_s|^2, \quad  b_s=e^{-\frac{(s+1)^2}{L^2}}  e^{i(s+1)(\eta-\eta')},\quad J_{\ell} = 0, \quad J_r = \infty.
\]
Performing a summation by parts \eqref{e.sumbyparts2} on the $s$-summation in \eqref{e.insert7aa}, we obtain
\begin{align}
\sum_{s=0}^{\infty} & e^{i(\frac{1}{\eta}- \frac{1}{\eta'} )  \sum_{k=0}^{s-1} |\alpha_k|^2} |\alpha_s|^2 e^{-\frac{(s+1)^2}{L^2}}  e^{i(s+1)(\eta-\eta')} \notag\\
& = |\alpha_0|^2 \sum_{s=0}^{\infty} e^{-\frac{(s+1)^2}{L^2}}  e^{i(s+1)(\eta-\eta')} \label{e.analog9a}\\
&\qquad + \sum_{s=0}^{\infty}\left(e^{i(\frac{1}{\eta}- \frac{1}{\eta'} )  \sum_{k=0}^{s} |\alpha_k|^2} |\alpha_{s+1}|^2 - e^{i(\frac{1}{\eta}- \frac{1}{\eta'} )  \sum_{k=0}^{s-1} |\alpha_k|^2} |\alpha_s|^2\right) \times \label{e.analog9b}\\
& \qquad \quad\times\sum_{k=s+1}^{\infty}e^{-\frac{(k+1)^2}{L^2}}  e^{i(k+1)(\eta-\eta')}. \notag
\end{align}
Performing a summation by parts on \eqref{e.analog9a} and the $k$-summation in \eqref{e.analog9b}, we can get expressions similar to \eqref{estimate7a} and \eqref{estimate7b}. Note that due to \eqref{e.useof(i)}, we know the absolute value of \eqref{estimate7a} can be bounded by a constant times $\frac{1}{|\eta-\eta'|}$. Then the absolute value of \eqref{e.analog9a} and that of $k$-summation in \eqref{e.analog9b} can be estimated by a constant times $\frac{1}{|\eta-\eta'|}$. Now, we consider the remaining term in \eqref{e.analog9b}. Its absolute value can be estimated as follows,
\begin{align*}
\left|\sum_{s=0}^{\infty} \right. & \left. \left(e^{i(\frac{1}{\eta}- \frac{1}{\eta'} )  \sum_{k=0}^{s} |\alpha_k|^2} |\alpha_{s+1}|^2-e^{i(\frac{1}{\eta}- \frac{1}{\eta'} )  \sum_{k=0}^{s-1} |\alpha_k|^2} |\alpha_s|^2\right)\right|\\
& \le 2 \sum_{s=0}^{\infty}  |\alpha_s|^2 \le 2|\alpha_0|^2 + \sum_{s=1}^{\infty} s^{\gamma} |\alpha_s|^2  \le C,
\end{align*}
where we used \eqref{e.vcass} in the last step.

Thus, the absolute value of \eqref{e.insert7aa} can be estimated as follows. For any $\varepsilon \in (0,D)$, we have
\begin{align*}
C & \iint_{|\eta-\eta'| > L^{\frac{\gamma}{D}-1}} \left|\frac{1}{\eta}-\frac{1}{\eta'}\right| |f(\eta) \overline{f(\eta')}| \frac{1}{|\eta-\eta'|^2} \, d\nu(\eta) d\nu(\eta') \\
& = C \iint_{|\eta-\eta'| > L^{\frac{\gamma}{D}-1}}  \frac{1}{|\eta\eta'|} |f(\eta) \overline{f(\eta')}| \frac{1}{|\eta-\eta'|} \, d\nu(\eta) d\nu(\eta') \\
& \lesssim \iint_{|\eta-\eta'| > L^{\frac{\gamma}{D}-1}}  \frac{|f(\eta) \overline{f(\eta')}|}{|\eta-\eta'|} \, d\nu(\eta) d\nu(\eta') \\
& \le \left(\iint_{|\eta-\eta'| > L^{\frac{\gamma}{D}-1}}  \frac{|f(\eta)|^2}{|\eta-\eta'|}\right)^{\frac{1}{2}} \, d\nu(\eta) d\nu(\eta') \\
& \qquad \times \left(\iint_{|\eta-\eta'| > L^{\frac{\gamma}{D}-1}}  \frac{|f(\eta')|^2}{|\eta-\eta'|}\right)^{\frac{1}{2}} \, d\nu(\eta) d\nu(\eta') \\
& = \iint_{|\eta-\eta'| > L^{\frac{\gamma}{D}-1}} \frac{|f(\eta)|^2}{|\eta-\eta'|} \, d\nu(\eta) d\nu(\eta') \\
& = \int |f(\eta)|^2  \int_{|\eta-\eta'| > L^{\frac{\gamma}{D}-1}}\frac{1}{|\eta-\eta'|^{1-D+\varepsilon} |\eta-\eta'|^{D-\varepsilon}} \, d\nu(\eta') d\nu(\eta)\\
& \le L^{-(\frac{\gamma}{D}-1)(1-D+\varepsilon)}\int |f(\eta)|^2  \int_{|\eta-\eta'| > L^{\frac{\gamma}{D}-1}}\frac{1}{|\eta-\eta'|^{D-\varepsilon}} \, d\nu(\eta')d\nu(\eta) \\
& = L^{(1 - \frac{\gamma}{D})(1-D+\varepsilon)}\int |f(\eta)|^2  \int_{|\eta-\eta'| > L^{\frac{\gamma}{D}-1}}\frac{1}{|\eta-\eta'|^{D-\varepsilon}} \, d\nu(\eta')d\nu(\eta)\\
& \lesssim L^{1-D} \int |f(\eta)|^2 \, d\nu(\eta),
\end{align*}
where we used the assumption (i) in the second step, Cauchy-Schwarz inequalities in the third step, symmetry in the forth setp and \cite[Lemma 2.2]{r2} in the last step.

\medskip

We now move on to studying \eqref{estimate7b}. Insert \eqref{estimate7b} into \eqref{estimate6b2}, and then plug the resulting term into \eqref{estimate5}. We get
\begin{align}
 C \iint \, & d\nu(\eta) d\nu(\eta') f(\eta) \overline{f(\eta')}e^{i(\eta-\eta')} \sum_{s=0}^{\infty} e^{i(\frac{1}{\eta}- \frac{1}{\eta'} )  \sum_{k=0}^{s-1} |\alpha_k|^2}  \times \label{e.insert7b}\\
& \quad \quad \times \Big(\frac{1}{\eta}-\frac{1}{\eta'}\Big) |\alpha_s|^2 \sum_{j=s+2}^{\infty} \left(e^{-\frac{j^2}{L^2}} -e^{-\frac{(j-1)^2}{L^2}} \right)\frac{ e^{ij(\eta-\eta')}}{1 - e^{i(\eta-\eta')}}.\notag
\end{align}
The same break-up as above will be used. If $|\eta-\eta'| \le L^{\frac{\gamma}{D}-1}$, note that
\begin{align*}
\left|\sum_{j=s+2}^{\infty} \right . & \left. \left(e^{-\frac{j^2}{L^2}} -e^{-\frac{(j-1)^2}{L^2}} \right)\frac{ e^{ij(\eta-\eta')}}{1 - e^{i(\eta-\eta')}}\right| \\
& \le \frac{1}{|\eta-\eta'|}\sum_{j=s+2}^{\infty} \left(e^{-\frac{(j-1)^2}{L^2}} - e^{-\frac{j^2}{L^2}}\right) \\
& = \frac{e^{-\frac{(s+1)^2}{L^2}}}{|\eta-\eta'|}.
\end{align*}
Thus, the absolute value of \eqref{e.insert7b} can be estimated by
\[
 C \iint_{|\eta-\eta'| \le L^{\frac{\gamma}{D}-1}} \, |f(\eta) \overline{f(\eta')}| \sum_{s=0}^{\infty} \frac{1}{|\eta\eta'|} |\alpha_s|^2e^{-\frac{(s+1)^2}{L^2}} d\nu(\eta) d\nu(\eta').
\]
Proceeding as above, we can get the desired bound.

If $|\eta-\eta'| > L^{\frac{\gamma}{D}-1}$, we perform the summation by parts \eqref{e.sumbyparts} on \eqref{estimate7b} one more time. Set
\[
a_j= e^{-\frac{j^2}{L^2}} - e^{-\frac{(j-1)^2}{L^2}}, \quad b_{j} = \frac{1-e^{ij(\eta-\eta')}}{1-e^{i(\eta-\eta')}},\quad  J_{\ell} = s+2, \quad J_{r} = \infty.
\]
Hence, we have
\begin{align*}
&\frac{ 1}{1 - e^{i(\eta-\eta')}}\sum_{j=s+2}^{\infty}  \left(e^{-\frac{j^2}{L^2}}-e^{-\frac{(j-1)^2}{L^2}}\right) e^{ij(\eta-\eta')} \\
& = - \frac{1}{1 - e^{i(\eta-\eta')}}\left((e^{-\frac{(s+2)^2}{L^2}} - e^{-\frac{(s+1)^2}{L^2}})\frac{1-e^{i(s+2)(\eta-\eta')}}{1-e^{i(\eta-\eta')}} \right.\\
& \qquad \left.+ \sum_{j=s+3}^{\infty} \frac{1-e^{ij(\eta-\eta')}}{1-e^{i(\eta-\eta')}} (e^{-\frac{j^2}{L^2}} - e^{-\frac{(j-1)^2}{L^2}}- e^{-\frac{(j-1)^2}{L^2}} + e^{-\frac{(j-2)^2}{L^2}})\right)\\
& = - \frac{1}{1 - e^{i(\eta-\eta')}}\left((e^{-\frac{(s+2)^2}{L^2}} - e^{-\frac{(s+1)^2}{L^2}})\frac{1-e^{i(s+2)(\eta-\eta')}}{1-e^{i(\eta-\eta')}} \right.\\
& \qquad \left.+ \sum_{j=s+3}^{\infty} \frac{1-e^{ij(\eta-\eta')}}{1-e^{i(\eta-\eta')}} (e^{-\frac{(j)^2}{L^2}} - 2e^{-\frac{(j-1)^2}{L^2}} + e^{-\frac{(j-2)^2}{L^2}})\right).
\end{align*}
Plugging the above expression into \eqref{e.insert7b}, the absolute value of \eqref{e.insert7b} can be estimated as follows since the absolute value of the above expression can be bounded by a constant times $\frac{1}{|\eta-\eta|^2}$.
\begin{align*}
C & \iint_{|\eta-\eta'| > L^{\frac{\gamma}{D}-1}} \, d\nu(\eta) d\nu(\eta') |f(\eta) \overline{f(\eta')}| \sum_{s=0}^{\infty}  \left|\frac{1}{\eta}-\frac{1}{\eta'}\right| |\alpha_s|^2 \frac{1}{|\eta-\eta'|^2} \\
& = C \iint_{|\eta-\eta'| > L^{\frac{\gamma}{D}-1}} \,  d\nu(\eta) d\nu(\eta') |f(\eta) \overline{f(\eta')}| \sum_{s=0}^{\infty}  |\alpha_s|^2 \frac{1}{|\eta-\eta'| |\eta\eta'|} \\
& \lesssim \iint_{|\eta-\eta'| > L^{\frac{\gamma}{D}-1}} \,  d\nu(\eta) d\nu(\eta') \frac{|f(\eta) \overline{f(\eta')}|}{|\eta-\eta'|},
\end{align*}
where we used the assumption (i) and \eqref{e.vcass} in the second step. Proceeding as above, one can get the desired bound.
\end{proof}

\begin{theorem}{\label{t3.1}}
Assume that \eqref{e.vcass} holds. Suppose that $\nu$ is a finite Borel measure on $(0,2\pi)$ with the following two properties:
\begin{itemize}

\item[{\rm (i)}] $\nu$ is supported by some compact subset $S_\nu$ of $(0,2\pi)$,

\item[{\rm (ii)}] $\nu$ is uniformly $D$-H\"older continuous for some $D \in (1-\gamma,1)$, that is, $\nu(I) \lesssim |I|^D$ for every interval $I \subseteq (0,2\pi)$.

\end{itemize}
Then there is a constant $C < \infty$ so that
$$
\sup_\beta \int \left| \sum_{s=x_{n-1}}^{x_n-1}  f(s) e^{i\omega(s,\eta,\beta)} \right|^2 \, d\nu(\eta) \le C (x_n-x_{n-1})^{1-D} \sum_{s=x_{n-1}}^{x_n-1} \left| f(s) \right|^2
$$
for all strictly increasing sequences $\{x_n\}_{n \in \Z_+} \subseteq \Z_+$ and all $f\in \ell^2(\Z_+)$.
\end{theorem}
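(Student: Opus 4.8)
The plan is to derive Theorem~\ref{t3.1} from Theorem~\ref{t2.1} by a shift of the summation index followed by a duality argument; all the analytic work is already contained in Theorem~\ref{t2.1}, so what remains is essentially bookkeeping.

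First I would reduce to a block anchored at the origin. Given the block $\{x_{n-1},\dots,x_n-1\}$, set $N := x_n - x_{n-1}$, $L := N-1$, and introduce the translated coefficients $\tilde\alpha_k := \alpha_{k+x_{n-1}}$ together with the phase $\tilde\omega$ attached to them via \eqref{e.omegadef}. In $\omega(s,\eta,\beta) = (s+1)\eta + \beta + \tfrac1\eta\sum_{k=0}^{s-1}|\alpha_k|^2$ the quantities $\beta$, $x_{n-1}\eta$ and $\tfrac1\eta\sum_{k=0}^{x_{n-1}-1}|\alpha_k|^2$ appear as summands that do not depend on $s$ once $s\ge x_{n-1}$, so a direct computation gives $\omega(x_{n-1}+t,\eta,\beta) = \tilde\omega(t,\eta,0) + \Lambda(\eta,\beta)$ with $\Lambda(\eta,\beta)\in\R$; since $|e^{i\Lambda(\eta,\beta)}|=1$ this yields
\[
\Big| \sum_{s=x_{n-1}}^{x_n-1} f(s)\, e^{i\omega(s,\eta,\beta)} \Big| = \Big| \sum_{t=0}^{L} f(x_{n-1}+t)\, e^{i\tilde\omega(t,\eta,0)} \Big| ,
\]
independently of $\beta$. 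The shift does not damage the decay hypothesis, because $\sum_k k^\gamma|\tilde\alpha_k|^2 \le \sum_k (k+x_{n-1})^\gamma|\alpha_{k+x_{n-1}}|^2 \le \sum_j j^\gamma|\alpha_j|^2 < \infty$. Hence it suffices to prove, for \emph{any} coefficient sequence obeying \eqref{e.vcass}, all $L\in\Z_+$ and all $g\in\ell^2(\{0,\dots,L\})$, the estimate $\int |\sum_{s=0}^L g(s)\, e^{i\omega(s,\eta,0)}|^2 \, d\nu(\eta) \le C(L+1)^{1-D}\sum_{s=0}^L|g(s)|^2$, where $C$ depends only on $\gamma$, $D$, the number $\delta$ with $S_\nu\subseteq(\delta,2\pi-\delta)$, and $\sum_j j^\gamma|\alpha_j|^2$.

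Next I would verify that the hypotheses of Theorem~\ref{t3.1} imply those of Theorem~\ref{t2.1}: a compact subset of $(0,2\pi)$ is contained in some $(\delta,2\pi-\delta)$, and $D$-H\"older continuity with $D\in(1-\gamma,1)$ is assumed outright. So Theorem~\ref{t2.1}, applied to the translated data, says precisely that the linear map $W\colon L^2((0,2\pi),d\nu)\to\ell^2(\{0,\dots,L\})$, $(Wf)(s) = \int f(\eta)\, e^{i\omega(s,\eta,0)}\, d\nu(\eta)$, is bounded with $\|W\|^2 \lesssim (L+1)^{1-D}$ (the $\sup_\beta$ there being vacuous for exactly the reason above). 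Since $\omega(s,\cdot,0)$ is real-valued, pairing identifies the adjoint as $(W^*g)(\eta) = \sum_{s=0}^L g(s)\, e^{-i\omega(s,\eta,0)}$, and $\|W^*\|=\|W\|$ gives
\[
\int \Big| \sum_{s=0}^{L} g(s)\, e^{-i\omega(s,\eta,0)} \Big|^2 d\nu(\eta) = \|W^*g\|_{L^2(\nu)}^2 \le \|W\|^2 \sum_{s=0}^{L}|g(s)|^2 \lesssim (L+1)^{1-D}\sum_{s=0}^{L}|g(s)|^2 .
\]
Taking the complex conjugate inside the modulus (which changes nothing) and renaming $\overline{g}$ as $g$ produces the inequality isolated at the end of the first paragraph, with $L+1 = x_n - x_{n-1}$, and the reduction then gives Theorem~\ref{t3.1}. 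The degenerate case $L=0$ is immediate from Cauchy--Schwarz and finiteness of $\nu$, and can be dealt with first.

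I do not expect a serious obstacle; the argument is the routine passage from a WKB-transform bound to its transpose via adjoints. The one point needing genuine care is the \emph{uniformity} of the constant in Theorem~\ref{t2.1} over all translates of the coefficient sequence: one must check that the constant produced in the proof of Theorem~\ref{t2.1} depends on $\{\alpha_n\}$ only through $\sum_j j^\gamma|\alpha_j|^2$ — which holds since every $\alpha$-dependent sum appearing there (e.g.\ $\sum_s |\alpha_{s+1}|^4(1+s)^\gamma$, $\sum_s s^\gamma|\alpha_s|^2$, $\sum_s|\alpha_s|^2$) is dominated by this quantity plus an absolute constant, using $|\alpha_n|<1$ — and that $\sum_j j^\gamma|\alpha_j|^2$ does not increase under the translation $\alpha_\bullet\mapsto\alpha_{\bullet+x_{n-1}}$. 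Everything else is the identity $\omega(x_{n-1}+t,\eta,\beta)=\tilde\omega(t,\eta,0)+\Lambda(\eta,\beta)$ and the abstract adjoint computation.
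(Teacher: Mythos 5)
Your proposal is correct and uses essentially the same strategy as the paper: translate the block to start at zero (absorbing the $s$-independent part of $\omega$ into a unimodular factor and noting the shifted coefficients still obey \eqref{e.vcass}), then derive the stated bound from Theorem~\ref{t2.1} by duality. The paper writes this duality step out concretely — factoring out the argument of $g_n$, applying Cauchy--Schwarz in $s$, invoking Theorem~\ref{t2.1} with $|g_n|e^{-i\phi_n}$ as the test function, and dividing by $\|g_n\|$ — which is exactly the unpacked form of your $\|W^*\|=\|W\|$ argument, and the uniformity-in-$\alpha$ caveat you flag is also explicitly noted in the paper.
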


\begin{proof}
Let us write
\[
g_n(\eta,\beta) = \sum_{s=x_{n-1}}^{x_n-1} f(s) e^{i\omega(s,\eta,\beta)},
\]
and $\phi_n(\eta,\beta)=\arg(g_n(\eta,\beta))$. Then,
\begin{align*}
\int  & |g_n(\eta,\beta)|^2 \, d\nu(\eta) = \int |g_n(\eta,\beta)| e^{-i\phi_n(\eta,\beta)} \sum_{s=x_{n-1}}^{x_n-1} f(s) e^{i\omega(s,\eta,\beta)} \, d\nu(\eta) \\
& = \sum_{s=x_{n-1}}^{x_n-1} f(s) \int |g_n(\eta,\beta)| e^{-i\phi_n(\eta,\beta)}e^{i\omega(s,\eta,\beta)} \, d\nu(\eta) \\
& = \sum_{s=0}^{x_n-1-x_{n-1}} f(s+x_{n-1}) \int |g_n(\eta,\beta)| e^{-i\phi_n(\eta,\beta)}e^{i\omega(s+x_{n-1},\eta,\beta)} \, d\nu(\eta) \\
& \le \|f\|_{\ell^2 (x_{n-1},x_n-1)} \times \\
& \qquad \times \left( \sum_{s=0}^{x_n-1-x_{n-1}} \left| \int |g(\eta,\beta)| e^{i(\omega_{\alpha_0}(x_{n-1}-1,\eta,0)-\phi(\eta,\beta))} e^{i\omega_{\alpha_{x_{n-1}}}(s,\eta,\beta)} \, d\nu(\eta) \right|^2 \right)^{\frac{1}{2}},
\end{align*}
where we used the Cauchy-Schwarz inequality in the last step and $\omega_{\alpha_n}(s,\eta,\beta) = (s+1)\eta +\beta +\frac{1}{\eta}\sum_{k=n}^{s+n-1}|\alpha_k|^2 $.
Now Theorem~\ref{t2.1} yields
\begin{equation}{\label{e.gnorm}}
\|g(\beta)\|_{L^2((0,2\pi), d\nu)}^2 \le C \|f\|_{\ell^2(x_{n-1},x_n-1)} (x_{n}-x_{n-1})^{\frac{1-D}{2}} \|g(\beta)\|_{L^2((0,2\pi), d\nu)}.
\end{equation}
Here, it is important that the constant $C$ is independent of the particular form of $\alpha$. Thus, since $\alpha_{\cdot + x_{n-1}}$ also satisfies \eqref{e.vcass} if $\alpha_\cdot$ does, the constant in \eqref{e.gnorm} is independent of $x_{n-1}$.

Divide \eqref{e.gnorm} by $\|g(\beta)\|_{L^2((0,2\pi), d\nu)}$ and then take squares. This concludes the proof.
\end{proof}

\section{The Iteration Procedure}\label{sec.4}

Our goal in this section is to prove the following theorem:

\begin{theorem}\label{t4.1}
Assume that \eqref{e.vcass} holds. Suppose that $\nu$ is a finite Borel measure on $(0,2\pi)$ with the following two properties:
\begin{itemize}

\item[{\rm (i)}] There is a $\delta > 0$ such that $\nu$ is supported by $(\delta,2\pi-\delta)$.

\item[{\rm (ii)}] There is a $D \in (1-\gamma,1)$ such that $\nu$ is uniformly $D$-H\"older continuous, that is, $\nu(I) \lesssim |I|^D$ for every interval $I \subseteq (0,2\pi)$.

\end{itemize}
Then
\begin{equation}\label{e.FinitePruferRadius}
\sup \{ R_n(\eta,\beta) : \beta \in [0, 2\pi), n \in \Z_+ \} < \infty
\end{equation}
for $\nu$-almost every $\eta$.
\end{theorem}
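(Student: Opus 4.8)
The plan is to run an iteration over successively finer partitions of $\Z_+$, controlling at each stage the growth of the Pr\"ufer radius via the WKB transform estimate of Theorem~\ref{t3.1}. The starting point is formula \eqref{eq:RPrufer}, which upon taking logarithms and summing gives, under \eqref{e.vcass},
\[
\log R_n^2(\eta,\beta) = \sum_{j=0}^{n-1} \log\!\left(1 + |\alpha_j|^2 - 2\Re\big(\alpha_j e^{i[(j+1)\eta + \beta + 2\theta_j(\eta,\beta)]}\big)\right).
\]
Since $\{\alpha_j\}\in\ell^2$, the contribution of the $|\alpha_j|^2$ terms and of the quadratic Taylor remainder in the logarithm is summable, so it suffices to control
\[
\Sigma_n(\eta,\beta) = \sum_{j=0}^{n-1} \Re\big(\alpha_j e^{i\psi(j,\eta,\beta)}\big), \qquad \psi(j,\eta,\beta) = (j+1)\eta + \beta + 2\theta_j(\eta,\beta),
\]
as in \eqref{fs}; establishing $\sup_{n,\beta}|\Sigma_n(\eta,\beta)| < \infty$ for $\nu$-a.e.\ $\eta$ gives \eqref{e.FinitePruferRadius}. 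The key point, exploiting Remark~\ref{r.opucsodiff}, is that $\psi(j,\eta,\beta)$ differs from the ``free'' phase $\omega(j,\eta,\beta)$ of \eqref{e.omegadef} only through $2\theta_j - \tfrac1\eta\sum_{k<j}|\alpha_k|^2$, and from \eqref{eq:ThetaPrufer} one has $\theta_{j+1}-\theta_j = \Im(\alpha_j e^{i\psi(j,\eta,\beta)}) + O(|\alpha_j|^2)$ — there is \emph{no} term linear in $\alpha_j$ that fails to oscillate — so this discrepancy is itself controlled by sums of the same type.

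The iteration is structured as follows. I would first prove an a priori bound: from Theorem~\ref{t3.1} applied with $f(s)=\alpha_s$ on a dyadic block $[2^m, 2^{m+1})$, together with a Chebyshev/Borel--Cantelli argument, one gets that for $\nu$-a.e.\ $\eta$ there is $C_\eta$ with $\sup_\beta |\Sigma_{2^{m+1}}(\eta,\beta) - \Sigma_{2^m}(\eta,\beta)|^2 \lesssim 2^{m(1-D)}\sum_{j=2^m}^{2^{m+1}-1}|\alpha_j|^2$ — but this only controls the \emph{WKB} sums $\int f(s)e^{i\omega(s,\eta,\beta)}$, not the true Pr\"ufer sums with phase $\psi$. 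The genuine work is a bootstrapping step: assuming inductively that $\theta_j(\eta,\beta)$ is close to $\tfrac12\omega(j,\eta,\beta)$ up to a bounded error on a block, one substitutes $e^{i\psi} = e^{i\omega}\cdot e^{2i(\theta_j - \omega/2)}$ and Taylor-expands the correction factor (exactly as in the proof of Theorem~\ref{t2.1}, where the factor $e^{i(\frac1\eta-\frac1{\eta'})|\alpha_s|^2}-1$ was expanded), reducing $\Sigma_n$ on that block to a WKB transform of $\{\alpha_s\}$ plus higher-order terms governed by $\sum s^\gamma|\alpha_s|^2 < \infty$. One then feeds this back into \eqref{eq:ThetaPrufer} to update the angle estimate on the next block, and iterates over a sequence of partitions whose mesh is refined geometrically so that the accumulated errors $\sum_m (\text{block length})^{1-D}\cdot(\ell^2\text{ mass of block})$ converge — this is where $D > 1-\gamma$ is essential, via a Cauchy--Schwarz splitting $|\alpha_j|^2 = (j^{\gamma/2}|\alpha_j|^2)\cdot j^{-\gamma/2}$ exactly as in the display bounding \eqref{e.1+salpha} in the previous proof.

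I expect the main obstacle to be the simultaneous control in $\beta$: the estimate \eqref{e.FinitePruferRadius} must hold for \emph{all} $\beta\in[0,2\pi)$ at once (this is what is needed in Section~\ref{sec.5} to bound the Szeg\H{o} matrices, which correspond to two specific values of $\beta$ arising from the first and second kind polynomials — but one does not know these values a priori uniformly), and the angle $\theta_j(\eta,\beta)$ enters the phase $\psi$ nonlinearly and $\beta$-dependently, so the inductive ``closeness'' hypothesis must be quantitative and uniform in $\beta$. Taking a countable dense set of $\beta$'s and using continuity is not obviously enough because the bounds could degenerate; instead I would keep the $\sup_\beta$ inside all WKB estimates from the outset (as Theorems~\ref{t2.1} and \ref{t3.1} are stated), and close the induction on the quantity $\sup_\beta\sup_{j \le x_n}|2\theta_j(\eta,\beta) - \omega(j,\eta,\beta) - c_\eta|$ for a suitable $\eta$-dependent constant, verifying at each refinement stage that this remains finite for $\nu$-a.e.\ $\eta$ via Borel--Cantelli applied to the WKB bound of Theorem~\ref{t3.1}. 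A secondary technical point is handling the convergence of the Taylor series in the angle correction uniformly — but, as in the proof of Theorem~\ref{t2.1}, the factorials $1/k!$ and the bound $\sum_{k\ge1} 1/k! = e-1$ absorb this cleanly once the first-order term is under control.
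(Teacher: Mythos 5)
Your high-level reading is sound: decompose the Pr\"ufer phase as $\psi = \omega + \tau$, exploit that $\tau(k+1)-\tau(k)$ has no non-oscillating term linear in $\alpha_k$, and feed the WKB bound of Theorem~\ref{t3.1} through a refinement of partitions. But the mechanism you describe for closing the iteration has a genuine gap, and a whole step of the paper's argument is missing.

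The central problem is your proposed inductive hypothesis: you want to close the induction on $\sup_\beta\sup_{j\le x_n}\bigl|2\theta_j(\eta,\beta)-\omega(j,\eta,\beta)-c_\eta\bigr|$, i.e.\ you assume $\theta_j$ stays quantitatively close to $\tfrac12\omega(j)$ on each block and then ``update the angle estimate on the next block.'' The paper never establishes, needs, or could establish such a bound: $\tau(n,\eta,\beta)=2\theta_n-\tfrac1\eta\sum_{k<n}|\alpha_k|^2$ is not shown to be bounded along the iteration, and there is no reason it should be. What \emph{is} controlled is only the increment $|\tau(k+1)-\tau(k)|\lesssim|\alpha_k|+|\alpha_{k+1}|$ (the paper's \eqref{e.absofdifftau}), and the whole point of the Abel summation in \eqref{e.key} is to convert $\sum_k\alpha_k e^{i\omega(k)}e^{i\tau(k)}$ into a boundary term handled directly by Theorem~\ref{t3.1} plus a term involving only increments of $e^{i\tau}$. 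The iteration of Lemma~\ref{l.mainlemma} is then a purely algebraic reduction: each stage performs the \emph{same} Abel summation on a finer partition of $[x_{n-1},x_n)$ and produces an expression of exactly the same form on the subintervals, plus a cross term and a boundary term bounded by Theorem~\ref{t3.1}. There is no feedback loop between the $R$-equation and the $\theta$-equation, and no quantitative closeness of $\theta$ to a free phase is ever invoked.

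A second missing ingredient is the termination criterion. It is not enough that each refinement ``shrinks the problem''; one needs the iteration to terminate after a number of steps that is \emph{uniform in $n$}, otherwise the error bounds do not sum. This is exactly what \eqref{e.goal2new} provides: after $j\geq 4N$ refinements the bound \eqref{e.Nnbound2} becomes $\lesssim\|\alpha^{(n)}\|_1\|(x_n-x_{n-1})^{(1-D)/2}\alpha^{(n)}\|_2^{\,j/4+1}$, which is summable in $n$ by \eqref{e.goal2new}. Your proposal neither formulates nor uses this condition, so even granting the bootstrap step it is unclear the scheme would close. Finally, your argument at best controls $R_{x_n}$ along the blockwise sequence $\{x_n\}$; the paper's Theorem~\ref{t4.1} is obtained from the weakened Proposition~\ref{t4.1-weakened} by an interpolation argument using the Kiselev maximal inequality \eqref{e.Kiselev} together with the $\ell^p$ estimate from \eqref{e.choiceofp}, and this whole step is absent from your outline. (Your concern about $\beta$-uniformity, by contrast, is not an obstacle: the estimates of Theorems~\ref{t2.1} and~\ref{t3.1} are already stated with a $\sup_\beta$, and every step of the iteration is manifestly uniform in $\beta$; the paper simply suppresses $\beta$ from the notation.)
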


\begin{remark}\label{r.notfinitelysupp}
Note that \eqref{e.FinitePruferRadius} is trivially true, and indeed well known, for every $\eta$ if the sequence of Verblunsky coefficients is finitely supported. So let us assume that $\{ \alpha_n \}_{n \in \Z_+}$ is not finitely supported for the remainder of this section.
\end{remark}

The proof will work with a condition on the Verblunsky coefficients $\{ \alpha_n \}$ that is different from \eqref{e.vcass}, so let us first show that \eqref{e.vcass} actually implies the condition we will base the proof of Theorem~\ref{t4.1} on (namely \eqref{e.goal} and \eqref{e.goal2new} below).

\begin{lemma}\label{l.verification}
Given a sequence of Verblunsky coefficients $\{ \alpha_n \}_{n \in \Z_+} \subseteq \D$ that is not finitely supported, define the strictly increasing sequence $\{ x_n \}_{n \in \Z_+} \subseteq \Z_+$ by
\begin{itemize}
\item $x_0 = 0$,
\item for every $n \in \Z_+$, $x_{n+1}$ is the smallest power of $2$ so that $x_{n+1} > x_n$ and $\alpha_j\neq 0$ for at least one $j \in [x_n, x_{n+1})$.
\end{itemize}
If \eqref{e.vcass} holds, then for every
\begin{equation}\label{e.Dassumption}
D \in \left( 1 - \gamma, 1 \right)
\end{equation}
and every integer
\begin{equation}\label{e.Nassumptionnew}
N \in \left( \frac{2 - \gamma - D}{D + \gamma - 1}, \infty \right),
\end{equation}
we have
\begin{equation}\label{e.goal}
\sum_{n=1}^\infty |x_n  - x_{n-1}|^{\frac{1-D}{2}} \|\alpha \chi_{[x_{n-1},x_n)}\|_2 < \infty
\end{equation}
and
\begin{equation}\label{e.goal2new}
\sup_n \| \alpha \chi_{[x_{n-1},x_n)} \|_1 \| |x_n - x_{n-1}|^{\frac{1-D}{2}}  \alpha \chi_{[x_{n-1},x_n)} \|_2^N < \infty
\end{equation}
\end{lemma}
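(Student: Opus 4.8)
\textbf{Proof strategy for Lemma~\ref{l.verification}.}
The plan is to exploit the dyadic structure of the sequence $\{x_n\}$ in a decisive way. By construction, for $n \ge 1$ both $x_{n-1}$ and $x_n$ are powers of $2$ with $x_n > x_{n-1}$, so $x_n - x_{n-1} \ge x_{n-1}$, hence $x_n \le 2(x_n - x_{n-1})$ and also $x_n - x_{n-1} \le x_n \le 2^{n}$ after reindexing; more importantly, the blocks $[x_{n-1},x_n)$ that actually contain a nonzero $\alpha_j$ have lengths that grow at least geometrically along the relevant subsequence, since consecutive values of $x_n$ differ by at least a factor of $2$. Thus on each block $I_n = [x_{n-1},x_n)$ we have $|I_n| \sim x_n$, and for $j \in I_n$ we have $j^\gamma \gtrsim x_n^\gamma \sim |I_n|^\gamma$ (using $j \ge x_{n-1} \ge |I_n|/2$ when $x_{n-1}>0$; the single block with $x_{n-1}=0$ is harmless and can be absorbed into the constant). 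Consequently
\[
\| \alpha \chi_{I_n} \|_2^2 = \sum_{j \in I_n} |\alpha_j|^2 \lesssim |I_n|^{-\gamma} \sum_{j \in I_n} j^\gamma |\alpha_j|^2,
\]
and writing $\epsilon_n := \sum_{j \in I_n} j^\gamma |\alpha_j|^2$ we have $\sum_n \epsilon_n < \infty$ by \eqref{e.vcass}.

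For \eqref{e.goal}, substitute the bound above:
\[
\sum_{n=1}^\infty |I_n|^{\frac{1-D}{2}} \|\alpha\chi_{I_n}\|_2 \lesssim \sum_{n=1}^\infty |I_n|^{\frac{1-D}{2}} |I_n|^{-\gamma/2} \epsilon_n^{1/2} = \sum_{n=1}^\infty |I_n|^{\frac{1-D-\gamma}{2}} \epsilon_n^{1/2}.
\]
Since $D \in (1-\gamma,1)$ the exponent $\frac{1-D-\gamma}{2}$ is strictly negative, and $|I_n| \gtrsim 2^{n-1}$ (the dyadic endpoints are strictly increasing powers of $2$, so $x_n \ge 2^n$ for the nonzero blocks after discarding an initial segment), so $|I_n|^{\frac{1-D-\gamma}{2}}$ decays geometrically in $n$; combined with $\epsilon_n^{1/2} \to 0$ and the Cauchy--Schwarz inequality $\sum_n |I_n|^{\frac{1-D-\gamma}{2}}\epsilon_n^{1/2} \le \big(\sum_n |I_n|^{\frac{1-D-\gamma}{2}}\big)^{1/2}\big(\sum_n |I_n|^{\frac{1-D-\gamma}{2}}\epsilon_n\big)^{1/2}$, finiteness follows since the first factor is a convergent geometric-type series and the second is bounded by $(\sup_n |I_n|^{\frac{1-D-\gamma}{2}})\sum_n \epsilon_n < \infty$.

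For \eqref{e.goal2new}, bound the $\ell^1$ norm by Cauchy--Schwarz on the block: $\|\alpha\chi_{I_n}\|_1 \le |I_n|^{1/2}\|\alpha\chi_{I_n}\|_2 \lesssim |I_n|^{1/2}|I_n|^{-\gamma/2}\epsilon_n^{1/2} = |I_n|^{\frac{1-\gamma}{2}}\epsilon_n^{1/2}$, while $\| |I_n|^{\frac{1-D}{2}}\alpha\chi_{I_n}\|_2^N = |I_n|^{\frac{N(1-D)}{2}}\|\alpha\chi_{I_n}\|_2^N \lesssim |I_n|^{\frac{N(1-D)}{2}}|I_n|^{-N\gamma/2}\epsilon_n^{N/2}$. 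Multiplying,
\[
\|\alpha\chi_{I_n}\|_1 \, \| |I_n|^{\frac{1-D}{2}}\alpha\chi_{I_n}\|_2^N \lesssim |I_n|^{\frac{1-\gamma}{2} + \frac{N(1-D-\gamma)}{2}} \, \epsilon_n^{\frac{N+1}{2}}.
\]
Since $\epsilon_n$ is bounded (indeed $\to 0$), the supremum over $n$ is finite provided the exponent of $|I_n|$ is nonpositive, i.e.
\[
\frac{1-\gamma}{2} + \frac{N(1-D-\gamma)}{2} \le 0 \iff N(D+\gamma-1) \ge 1-\gamma \iff N \ge \frac{1-\gamma}{D+\gamma-1},
\]
and since $\frac{1-\gamma}{D+\gamma-1} < \frac{2-\gamma-D}{D+\gamma-1}$ (because $1-\gamma < 2-\gamma-D \iff D<1$), the hypothesis \eqref{e.Nassumptionnew} guarantees this. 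The main point to be careful about — and the only real subtlety — is the passage $j^\gamma \gtrsim |I_n|^\gamma$ for $j \in I_n$, which relies precisely on the dyadic definition of $\{x_n\}$ forcing $x_{n-1} \ge |I_n|/2$; this is where the specific construction in the lemma is used, and it is why an arbitrary increasing sequence would not suffice.
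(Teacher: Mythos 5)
There is a genuine gap at exactly the point you flag as ``the only real subtlety.'' The claim $x_{n-1} \ge |I_n|/2$ is false in general. Writing $x_n = 2^{m_n}$, the construction allows $m_n - m_{n-1}$ to be large (whenever several consecutive dyadic blocks contain no nonzero $\alpha_j$). For instance $x_{n-1}=2$, $x_n = 64$ gives $|I_n| = 62$, and $x_{n-1} = 2 < 31 = |I_n|/2$. So your chain $j \ge x_{n-1} \ge |I_n|/2$ breaks, and with it the inequality $j^\gamma \gtrsim |I_n|^\gamma$ for \emph{all} $j \in I_n$, which is indeed false: for $j$ near the left endpoint $x_{n-1}$ one may have $j \ll |I_n|$. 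Replacing $|I_n|^{-\gamma}$ by $x_{n-1}^{-\gamma}$ in your central estimate would destroy the rest of the argument, since $x_{n-1}^{-\gamma}$ need not be comparable to $|I_n|^{-\gamma}$.

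The fix is available and is precisely what the paper's proof uses: by the \emph{minimality} of $x_n$ among powers of $2$ exceeding $x_{n-1}$ for which the preceding block contains a nonzero coefficient, every power of $2$ strictly between $x_{n-1}$ and $x_n$ marks a left endpoint of a block with all $\alpha_j = 0$. In particular $\alpha_j = 0$ for every $j \in [x_{n-1}, x_n/2)$, so $\| \alpha\chi_{I_n}\|_p = \|\alpha\chi_{[x_n/2,\,x_n)}\|_p$ and every $j$ with $\alpha_j \neq 0$ in $I_n$ satisfies $j \ge x_n/2 > |I_n|/2$. This, not $x_{n-1} \ge |I_n|/2$, is what the dyadic definition buys. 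With that substitution, your inequality $\|\alpha\chi_{I_n}\|_2^2 \lesssim |I_n|^{-\gamma}\epsilon_n$ is valid, and the remainder of the argument goes through. Once fixed, your route is a close variant of the paper's: the paper performs a dyadic telescoping of $|x_n - x_{n-1}|^{(1-D)/2}$ and sums the resulting geometric pieces, whereas you use $|I_n| \gtrsim 2^{n-1}$ directly plus Cauchy--Schwarz for \eqref{e.goal}. Your treatment of \eqref{e.goal2new} is in fact cleaner and shows boundedness under the weaker condition $N \ge (1-\gamma)/(D+\gamma-1)$; the paper's stronger threshold $N > (2-\gamma-D)/(D+\gamma-1)$ arises only from its choice to route the length factor through an extra dyadic-sum bound, which is not needed if one simply uses $|I_n| \le x_n$.
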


\begin{proof}
If \eqref{e.vcass} holds, then
$$
C := \sum_{s = 0}^\infty s^\gamma |\alpha_s|^2 < \infty.
$$

By construction of the sequence $\{ x_n \}_{n \in \Z_+}$ we can write, for $n \in \N$, $x_n=2^{m_n}$, for some strictly increasing sequence of positive integers $\{m_n\}$, and we have
\begin{align*}
\sum_{s = 2^{m_n}}^{2^{m_{n+1}}-1} s^\gamma |\alpha_s|^2 & = \sum_{s = 2^{m_{n+1} - 1}}^{2^{m_{n+1}}-1} s^\gamma |\alpha_s|^2 \\
& \ge (2^{m_{n+1} - 1})^\gamma \sum_{s = 2^{m_{n+1} - 1}}^{2^{m_{n+1}}-1} |\alpha_s|^2,
\end{align*}
which in turn yields
\begin{align}
\label{e.localL2estimate}
\| \alpha \chi_{[2^{m_n},2^{m_{n+1}})}\|_2 & = \| \alpha \chi_{[2^{m_{n+1} - 1},2^{m_{n+1}})}\|_2 \\
\nonumber & \le \sqrt{\frac{C}{2^{\gamma(m_{n+1} - 1)}}} \\
\nonumber & \lesssim 2^{-\frac{\gamma(m_{n+1}-1)}{2}}.
\end{align}
The Cauchy-Schwarz inequality together with \eqref{e.localL2estimate} then yields
\begin{align}
\label{e.localL1estimate}
\| \alpha \chi_{[2^{m_n},2^{m_{n+1}})}\|_1 & = \| \alpha \chi_{[2^{m_{n+1} - 1}, 2^{m_{n+1}})}\|_1 \\
\nonumber & \le 2^{\frac{m_{n+1}-1}{2}} \| \alpha \chi_{[2^{m_{n+1} - 1}, 2^{m_{n+1}})}\|_2 \\
\nonumber & \lesssim 2^{\frac{(1 - \gamma)(m_{n+1}-1)}{2}}.
\end{align}
Observe that
\begin{equation}\label{e.2powers}
|2^{m_{n+1}}-2^{m_n}|^{\frac{1-D}{2}}=\left|\sum_{j = m_n}^{m_{n+1} - 1} 2^j \right|^{\frac{1-D}{2}}\leq \sum_{j=m_n}^{m_{n+1} - 1} 2^{\frac{j(1-D)}{2}}
\end{equation}
and
\begin{align}
|2^{m_{n+1}}-2^{m_n}|^{\frac{(1-D)N}{2}} & = \left| \sum_{j = m_n}^{m_{n+1} - 1} 2^j \right|^{\frac{(1-D)N}{2}} \notag \\
& \le (m_{n+1} - m_n)^{\frac{(1-D)N}{2}} \sum_{j=m_n}^{m_{n+1} - 1} 2^{\frac{j(1-D)N}{2}} \notag \\
& \le 2^{\frac{(1-D)N}{2} \log_2(m_{n+1} - m_n)}\sum_{j=m_n}^{m_{n+1} - 1} 2^{\frac{j(1-D)N}{2}} \notag \\
& \le 2^{\frac{(1-D)N}{2} (m_{n+1} - 1)}\sum_{j=m_n}^{m_{n+1} - 1} 2^{\frac{j(1-D)N}{2}}, \label{e.2powers2}
\end{align}
where the second step holds since
$$
\left( \sum_{j=1}^{M} |a_j| \right)^{N} \le \left( M \max_{1\le j\le M} |a_j| \right)^N = M^N \max_{1\le j \le M} |a_j|^N \le M^N \sum_{j=1}^{M} |a_j|^{N}.$$

Let us now establish \eqref{e.goal} and \eqref{e.goal2new}. By \eqref{e.localL2estimate} and \eqref{e.2powers}, we have
\begin{align}
\left| 2^{m_{n+1}}  - 2^{m_n} \right|^{\frac{1-D}{2}} \| \alpha \chi_{[2^{m_n},2^{m_{n+1}})}\|_2 & \lesssim \sum_{j=m_n}^{m_{n+1} - 1} 2^{\frac{j(1-D)}{2}} 2^{-\frac{\gamma(m_{n+1}-1)}{2}} \notag \\
& \le \sum_{j=m_n}^{m_{n+1} - 1} 2^{\frac{j(1-D)}{2}} 2^{-\frac{j\gamma}{2}}\notag\\
& = \sum_{j=m_n}^{m_{n+1} - 1}2^{-\frac{j(\gamma + D - 1)}{2}},\label{e.localL2estimate*2}
\end{align}
which is summable when \eqref{e.Dassumption} holds, and \eqref{e.goal} follows.

Next, by \eqref{e.localL2estimate}, \eqref{e.localL1estimate} and \eqref{e.2powers2}, we have
\begin{align*}
& \| \alpha \chi_{[2^{m_n}, 2^{m_{n+1}})} \|_1  |2^{m_{n+1}}  - 2^{m_n}|^{\frac{(1-D)N}{2}} \| \alpha \chi_{[2^{m_n}, 2^{m_{n+1}})} \|_2^N \\
& \lesssim 2^{\frac{(1 - \gamma)(m_{n+1}-1)}{2}} 2^{\frac{(1-D)N}{2} (m_{n+1} - 1)} \left( \sum_{j=m_n}^{m_{n+1} - 1} 2^{\frac{j(1-D)N}{2}} \right) 2^{-\frac{\gamma N (m_{n+1}-1)}{2}} \\
& = 2^{\frac{m_{n+1} - 1}{2}(1 - \gamma + (1 - D- \gamma) N)} \left(\sum_{j=m_n}^{m_{n+1} - 1} 2^{\frac{j(1-D)N}{2}} \right)  \\
& \leq \sum_{j=m_n}^{m_{n+1} - 1} 2^{\frac{j}{2}(1 - \gamma + (1 - D- \gamma) N)}2^{\frac{j(1-D)}{2}} \text{ since $1 - \gamma + (1 - D- \gamma) N$ is negative  } \\
& = \sum_{j=m_n}^{m_{n+1} - 1} 2^{\frac{j}{2}[(1 - \gamma) + (1-D) + (1 - D- \gamma) N]} ,
\end{align*}
which is bounded when \eqref{e.Nassumptionnew} holds, and \eqref{e.goal2new} follows.
\end{proof}

Let us begin by proving a slightly weakened version of Theorem \ref{t4.1}. The only difference is that the supremum \eqref{e.FinitePruferRadius} is taken over the set $\{x_n\}$ rather than over $n \in \mathbb \Z_+$.

\begin{prop}\label{t4.1-weakened}
Assume that \eqref{e.vcass} holds, and let $\{x_n\}$ be chosen as in Lemma \ref{l.verification}. Suppose that $\nu$ is a finite Borel measure on $(0,2\pi)$ with the following two properties:
\begin{itemize}

\item[{\rm (i)}] There is a $\delta > 0$ such that $\nu$ is supported by $(\delta,2\pi-\delta)$.

\item[{\rm (ii)}] There is a $D \in (1-\gamma,1)$ such that $\nu$ is uniformly $D$-H\"older continuous, that is, $\nu(I) \lesssim |I|^D$ for every interval $I \subseteq (0,2\pi)$.

\end{itemize}
Then
\begin{equation}\label{e.FinitePruferRadius-weakened}
\sup \{ R_{x_n}(\eta,\beta) : \beta \in [0, 2\pi), n \in \Z_+ \} < \infty
\end{equation}
for $\nu$-almost every $\eta$.
\end{prop}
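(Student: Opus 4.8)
The plan is to control the growth of $R_{x_n}(\eta,\beta)$ by passing to the logarithm and exploiting the telescoping structure of the Pr\"ufer recursion \eqref{eq:RPrufer} together with the WKB estimate of Theorem~\ref{t3.1}. From \eqref{eq:RPrufer} we have, for each block $[x_{n-1},x_n)$,
$$
\log \frac{R_{x_n}^2(\eta,\beta)}{R_{x_{n-1}}^2(\eta,\beta)} = \sum_{s=x_{n-1}}^{x_n-1} \log\!\left(1 + |\alpha_s|^2 - 2\Re\big(\alpha_s e^{i\psi(s,\eta,\beta)}\big)\right),
$$
and using $\log(1+x) = x + O(x^2)$ for the small quantity $x = |\alpha_s|^2 - 2\Re(\alpha_s e^{i\psi(s,\eta,\beta)})$, this is $-2\sum_{s} \Re(\alpha_s e^{i\psi(s,\eta,\beta)})$ plus an error controlled by $\sum_s |\alpha_s|^2$ over the block (which is summable in $n$ by \eqref{e.vcass}). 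The first step is therefore to replace the true Pr\"ufer phase $\psi(s,\eta,\beta) = (s+1)\eta + \beta + 2\theta_s(\eta,\beta)$ by the model phase $\omega(s,\eta,\beta)$ from \eqref{e.omegadef}; the difference between $2\theta_s$ and $\frac{1}{\eta}\sum_{k=0}^{s-1}|\alpha_k|^2$ should, after iterating \eqref{eq:ThetaPrufer} and Taylor-expanding as in the proof of Theorem~\ref{t2.1}, produce only summable-in-$n$ errors together with a genuinely oscillatory main term. This linearization is where I expect to spend real effort, because it must be done block-by-block with constants uniform in $\beta$ and in the starting index $x_{n-1}$ (exactly the uniformity that Theorem~\ref{t3.1} was set up to provide via the shift-invariance of \eqref{e.vcass}).

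The second step is to sum the linearized contributions over $n$ and show the total is finite for $\nu$-a.e.\ $\eta$. Writing $B_n(\eta,\beta) = \sum_{s=x_{n-1}}^{x_n-1} \Re(\alpha_s e^{i\omega(s,\eta,\beta)})$, Theorem~\ref{t3.1} gives
$$
\sup_\beta \int |B_n(\eta,\beta)|^2 \, d\nu(\eta) \le C (x_n - x_{n-1})^{1-D} \sum_{s=x_{n-1}}^{x_n-1} |\alpha_s|^2 = C\,\big(|x_n-x_{n-1}|^{\frac{1-D}{2}} \|\alpha\chi_{[x_{n-1},x_n)}\|_2\big)^2.
$$
By Lemma~\ref{l.verification}, the right-hand side is the square of a summable sequence, so $\sum_n \|B_n(\cdot,\beta)\|_{L^2(d\nu)}$ converges, and in fact one wants this uniformly in $\beta$. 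Taking a countable dense set of $\beta$'s and using that $\theta_s$, hence $B_n$, depends continuously on $\beta$, one upgrades to all $\beta$ simultaneously; then Fubini/monotone convergence gives that $\sum_n \sup_\beta |B_n(\eta,\beta)|$ — or at least $\sum_n |B_n(\eta,\beta)|$ for each fixed $\beta$ with a bound uniform in $\beta$ — is finite for $\nu$-a.e.\ $\eta$. (A cleaner route: bound $\sup_\beta |B_n(\eta,\beta)|$ by an $L^2(d\nu)$ function using a Sobolev-in-$\beta$ or a discrete covering argument in $\beta$, at the cost of an extra $\beta$-derivative of $\omega$, which is harmless since the $\sum|\alpha_k|^2$ in \eqref{e.omegadef} only improves decay.)

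Combining: for $\nu$-a.e.\ $\eta$ and every $\beta$, $\log R_{x_n}^2(\eta,\beta) = \log R_{x_0}^2 - 2\sum_{m=1}^{n} B_m(\eta,\beta) + \big(\text{errors summable in }m\big)$, and the right-hand side is bounded uniformly in $n$ and $\beta$; exponentiating gives \eqref{e.FinitePruferRadius-weakened}. I also need to dispose of the higher-order terms from both Taylor expansions — the $\log(1+x)$ remainder and the phase-linearization remainder — and here the extra device is precisely the one used in Theorem~\ref{t2.1}'s proof and in Lemma~\ref{l.verification}: each such term carries extra powers of $\|\alpha\chi_{[x_{n-1},x_n)}\|_1$ or $\|\alpha\chi_{[x_{n-1},x_n)}\|_2$, and \eqref{e.goal2new} guarantees that $\|\alpha\chi_{[x_{n-1},x_n)}\|_1 \cdot \big(|x_n-x_{n-1}|^{\frac{1-D}{2}}\|\alpha\chi_{[x_{n-1},x_n)}\|_2\big)^N$ stays bounded, so a geometric-type series in $N$ converges. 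The main obstacle, as indicated, is the first step: making the passage from $\theta_s$ to the explicit model phase $\omega$ rigorous block-by-block with all constants uniform in $\beta$ and in the block's location, since without that uniformity Theorem~\ref{t3.1} cannot be applied blockwise and summed.
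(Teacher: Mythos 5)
Your overall architecture (pass to $\log R_{x_n}^2$, telescope over blocks, linearize $\log(1+x)$, replace the Pr\"ufer phase $\psi$ by the model phase $\omega$, apply Theorem~\ref{t3.1} blockwise, and sum using Lemma~\ref{l.verification}) matches the paper's proof up to the point where things get hard. The first term you identify, $B_n(\eta,\beta) = \sum_{s=x_{n-1}}^{x_n-1}\Re(\alpha_s e^{i\omega(s,\eta,\beta)})$, is handled by Theorem~\ref{t3.1} plus \eqref{e.goal} exactly as you say, and the $\log(1+x)$ remainder (quadratic in $\alpha$, hence summable over blocks by \eqref{e.vcass}) is also fine.

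The gap is in how you propose to control the phase error $\psi-\omega = \tau$. You wave at ``Taylor-expanding as in the proof of Theorem~\ref{t2.1}'' and at \eqref{e.goal2new} providing a ``geometric-type series in $N$,'' but a single summation by parts over the block $[x_{n-1},x_n)$ yields, after taking $L^2(d\nu)$ norms via Theorem~\ref{t3.1}, a bound of the form
$$
\sum_{k=x_{n-1}}^{x_n-1}(|\alpha_{k-1}|+|\alpha_k|)\,(x_n-x_{n-1})^{\frac{1-D}{2}}\|\alpha^{(n)}\|_2
\;\lesssim\;
\|\alpha^{(n)}\|_1\,(x_n-x_{n-1})^{\frac{1-D}{2}}\|\alpha^{(n)}\|_2,
$$
and with the dyadic $\{x_n\}$ of Lemma~\ref{l.verification} this is of order $2^{(2-2\gamma-D)(m_{n+1}-1)/2}$, which is \emph{not} summable over $n$ unless $D>2-2\gamma$ --- a condition strictly stronger than the hypothesis $D>1-\gamma$. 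Equivalently, this corresponds to \eqref{e.goal2new} with $N=1$, whereas \eqref{e.Nassumptionnew} forces $N>\frac{2-\gamma-D}{D+\gamma-1}$, which can be arbitrarily large when $D$ is close to $1-\gamma$. The missing ingredient is the iterated partition scheme (the paper's Lemma~\ref{l.mainlemma}): one must subdivide each block $[x_{n-1},x_n)$ into $N_n$ subblocks, with $N_n\sim\big((x_n-x_{n-1})^{\frac{1-D}{2}}\|\alpha^{(n)}\|_2\big)^{-1}$, iterate the decomposition $j$ times, and use the specific combinatorial choice of partition points (that either a subblock is a singleton or its $\ell^1$ mass is $\le\|\alpha^{(n)}\|_1 N_n^{-3j/4}$) to gain a factor $N_n^{-3j/4}$. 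Only after $j$ iterations does one land on a bound of the form $\|\alpha^{(n)}\|_1\big((x_n-x_{n-1})^{\frac{1-D}{2}}\|\alpha^{(n)}\|_2\big)^{j/4+1}$, to which \eqref{e.goal2new} applies with $j\ge 4N$. Your proposal, as written, has no mechanism to generate these extra powers of the weighted $\ell^2$ norm from a fixed-depth Taylor expansion, so the argument does not close in the full parameter range $D\in(1-\gamma,1)$.
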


\begin{proof}
Recall from Remark~\ref{r.notfinitelysupp} that we work under the assumption that $\{ \alpha_n \}_{n \in \Z_+}$ is not finitely supported. Therefore, Lemma~\ref{l.verification} applies and hence \eqref{e.goal} and \eqref{e.goal2new} hold for the sequence $\{ x_n \}_{n \in \Z_+}$ constructed there and $N$ satisfying \eqref{e.Dassumption} and \eqref{e.Nassumptionnew}.

With the Pr\"ufer variables $R_n(\eta,\beta)$ and $\theta_n (\eta,\beta)$ from \eqref{e.pruferdef} and $\omega(s,\eta,\beta)$ from \eqref{e.omegadef}, we further define
\begin{equation}\label{e.tau}
\tau(n,\eta,\beta) = 2\theta_n (\eta,\beta) - \frac{1}{\eta} \sum_{j=0}^{n-1} |\alpha_j|^2,
\end{equation}
and
\begin{equation}\label{e.psi}
\psi(n,\eta,\beta)=\omega(n,\eta,\beta) + \tau(n,\eta,\beta).
\end{equation}
We note that, equivalently,
\begin{equation}\label{e.psi2}
\psi(n,\eta,\beta)=(n+1)\eta + \beta + 2 \theta_n(\eta,\beta).
\end{equation}

We will aim to generalize the iteration scheme of \cite{r, r2}. Our goal is to show that
\begin{equation}\label{e.FinitePruferRadius2}
\sup_\beta \sum_{n=1}^\infty |\ln R_{x_n}(\eta,\beta)-\ln R_{x_{n-1}}(\eta,\beta)| < \infty
\end{equation}
for $\nu$-almost every $\eta$, as \eqref{e.FinitePruferRadius} then follows. In the following, we will suppress the $\beta$-dependence of the various quantities from the notation and note that our arguments are uniform in $\beta$.

It suffices to show that for any $\nu$ satisfying assumptions (i) and (ii) with
\begin{equation}\label{e.Dcondition}
D \in (1-\gamma,1),
\end{equation}
we have that
$$
\sum_{n=1}^\infty \left\vert \sum_{j = x_{n-1}}^{x_n-1} \alpha_j e^{i\omega(j,\eta)} e^{i\tau(j,\eta)} \right\vert
$$
is finite for $\nu$-almost every $\eta$. We perform a summation by parts to obtain
\begin{align}
\sum_{j=x_{n-1}}^{x_n-1}\alpha_je^{i\omega(j,\eta)}e^{i\tau(j,\eta)} & = e^{i\tau(x_n,\eta)} \sum_{j = x_{n-1}}^{x_n-1} \alpha_j e^{i\omega(j,\eta)} \notag \\
& \quad + \sum_{k = x_{n-1}}^{x_n-1} (e^{i\tau(k,\eta)} - e^{i\tau(k+1,\eta)}) \sum_{j = x_{n-1}}^{k} \alpha_j e^{i\omega(j,\eta)}, \label{e.key}
\end{align}
where we need the summation by parts of the following form
\[
\sum_{k=0}^{n} f_k g_k = g_{n+1} \sum_{k=0}^n f_k + \sum_{k=0}^n (g_k - g_{k+1}) \sum_{j=0}^{k} f_j.
\]


Due to Theorem~\ref{t3.1}, we see that the first term is already summable for $\nu$-almost every $\eta$. Namely, we have that
\begin{align*}
\int \left|\sum_{s=x_{n-1}}^{x_n-1} \alpha_s e^{i\omega(s,\eta)} \right| \, d\nu(\eta) & \le C \left( \int \left|\sum_{s=x_{n-1}}^{x_n-1}\alpha_se^{i\omega(s,\eta)}\right|^2 \, d\nu(\eta) \right)^{\frac{1}{2}} \\
& \le C (x_n-x_{n-1})^{\frac{1-D}{2}} \left( \sum_{s=x_{n-1}}^{x_n-1} |\alpha_{s}|^2 \right)^{\frac{1}{2}},
\end{align*}
which is summable due to \eqref{e.goal}. Here we used the Cauchy-Schwarz inequality in the first step and Theorem~\ref{t3.1} in the second step.
So, it suffices to deal with the second term, \eqref{e.key}.

Looking back at \eqref{e.key}, to show the second term converges we need to expand out the $e^{i\tau(k+1, \eta)}-e^{i\tau(k, \eta)}$ term. Suppressing the dependence on $\eta$ from the notation, we observe that
\begin{align}
\notag e^{is\tau(k+1)} & - e^{is\tau(k)} \\
\notag& = e^{is\tau(k)} (e^{is(\tau(k+1) - \tau(k))}-1) \\
\notag& = e^{is\tau(k)} \left( e^{2is(\theta_{k+1} - \theta_{k})} e^{-\frac{is}{\eta} |\alpha_k |^2} - 1 \right) \\
\notag& = e^{is\tau(k)} \left( \left( \frac{1 + |\alpha_k|^2 - 2 \mathrm{Re} (\alpha_k e^{i\psi(k)})}{(1 - 2 \alpha_k e^{i\psi(k)} + \alpha_k^2 e^{2i\psi(k)})} \right)^s e^{-\frac{is}{\eta} |\alpha_k |^2} - 1 \right) \\
\notag& \approx e^{is\tau(k)} \left( \left( (1 - 2 \mathrm{Re} (\alpha_k e^{i\psi(k)})) (1 + 2 \alpha_k e^{i\psi(k)})  \right)^s e^{-\frac{is}{\eta} |\alpha_k |^2} - 1 \right) \\
\label{e.eistau}& \approx e^{is\tau(k)} \left( - 2 s \mathrm{Re} (\alpha_k e^{i\psi(k)}) + 2 s \alpha_k e^{i\psi(k)} \right).
\end{align}
Here, we use \eqref{e.tau} in the second step, \eqref{eq:ThetaPrufer} and \eqref{e.psi2} in the third step, and the $``\approx"$ in the fourth and fifth steps means that we ignore terms of power $|\alpha_k|^2$ or higher.

Let us justify why we can neglect those terms of order $|\alpha_k|^\zeta$, $\zeta\geq 2$. We define $w:=\mathrm{Re}(\alpha_k e^{i\psi(k)})$ and $y:=\mathrm{Im}(\alpha_k e^{i\psi(k)})$, and  then expand out the following expression in $w$ and $y$.
\begin{align}
& \left( \frac{1 + |\alpha_k|^2 - 2 \mathrm{Re} (\alpha_k e^{i\psi(k)})}{(1 - 2 \alpha_k e^{i\psi(k)} + \alpha_k^2 e^{2i\psi(k)})} \right)^s\\
=&\left( \frac{1 + w^2+y^2 - 2 w}{1 - 2 (w+iy) + (w+iy)^2} \right)^s\notag\\
=&\left( \frac{1 + w^2+y^2 - 2 w}{1 - 2w+w^2-y^2+i(-2y+2wy)} \right)^s\\
=&\left(\frac{w^4-4w^3+6w^2-4w-y^4+1+i2(w-1)y(w^2-2w+y^2+1)}{1+w^4+4w^3+2w^2y^2+2w^2-12wy^2-4w+y^4+6y^2}\right)^s\notag \\
=&\left(\frac{w^4-4w^3+6w^2-4w-y^4+1}{1+w^4+4w^3+2w^2y^2+2w^2-12wy^2-4w+y^4+6y^2}\right.\\
&+i\left.\frac{2(w-1)y(w^2-2w+y^2+1)}{1+w^4+4w^3+2w^2y^2+2w^2-12wy^2-4w+y^4+6y^2}\right)^s
\end{align}

If we expand this out as a multivariable Taylor series in $w$ and $y$, it is clear that it converges absolutely when
\begin{equation}
|w|^4+|4w^3|+|2w^2y^2|+|2w^2|+|-12wy^2|+|-4w|+|y^4|+|6y^2|<1
\end{equation}
It is clear that in the $wy$-plane this region contains a disk of positive radius around the origin. Let's call this radius $\mathcal R$.

Phrased differently, we know that both the Taylor series

\begin{equation}\label{e.Taylor}
\mathrm{Re}\left(\left( \frac{1 + |\alpha_k|^2 - 2 \mathrm{Re} (\alpha_k e^{i\psi(k)})}{(1 - 2 \alpha_k e^{i\psi(k)} + \alpha_k^2 e^{2i\psi(k)})} \right)^s\right)=1+C_{1,1}w+C_{1,2}y+C_{2,1}w^2+C_{2,2}wy+C_{2,3}y^2+\ldots
\end{equation}
and
\begin{equation}\label{e.Taylor2}
\mathrm{Im}\left(\left( \frac{1 + |\alpha_k|^2 - 2 \mathrm{Re} (\alpha_k e^{i\psi(k)})}{(1 - 2 \alpha_k e^{i\psi(k)} + \alpha_k^2 e^{2i\psi(k)})} \right)^s\right)=1+D_{1,1}w+D_{1,2}y+D_{2,1}w^2+D_{2,2}wy+D_{2,3}y^2+\ldots
\end{equation}
converge absolutely whenever $w^2+y^2<\mathcal R^2$. Equivalently, both series converge absolutely whenever $|\alpha_k|<\mathcal R$.

We observe (for $\zeta\geq 2$ an integer, and $\Upsilon$ another integer that obeys $0\leq\Upsilon\leq \zeta$) that
\begin{align}
\int & \left|\sum_{n=1}^\infty \sum_{k=x_{n-1}}^{x_n-1}
\mathrm{Re}(\alpha_k e^{i\psi_k})^\Upsilon \mathrm{Im}(\alpha_k e^{i\psi_k})^{\zeta-\Upsilon}\sum_{j=x_{n-1}}^{k} \alpha_j e^{i\omega(j,\eta)}\right| \, d\nu(\eta)\label{e.Realpha.firstline}\\
& \leq \int \sum_{n=1}^\infty \sum_{k=x_{n-1}}^{x_n-1}
|\alpha_k|^\zeta\left| \sum_{j=x_{n-1}}^{k} \alpha_j e^{i\omega(j,\eta)}\right| \, d\nu(\eta) \\
& \leq C \sum_{n=1}^\infty \sum_{k=x_{n-1}}^{x_n-1} |\alpha_k|^\zeta \sqrt{\sum_{j=x_{n-1}}^{k} (k-x_{n-1})^{1-D}|\alpha_j|^2} \; \text{by Theorem \ref{t3.1} and Cauchy-Schwarz} \\
& \leq C \sum_{n=1}^\infty \sum_{k=x_{n-1}}^{x_n-1}
|\alpha_k|^\zeta \sqrt{\sum_{j=x_{n-1}}^{x_n-1} (x_n-1-x_{n-1})^{1-D}|\alpha_j|^2}\\
& \leq C \sum_{n=1}^\infty \left(\sum_{k=x_{n-1}}^{x_n-1}
|\alpha_k|^2 \right)^{\zeta/2} \sqrt{\sum_{j=x_{n-1}}^{x_n-1} (x_n-1-x_{n-1})^{1-D}|\alpha_j|^2} \\
& \leq C \sum_{n=1}^\infty(x_n-1-x_{n-1})^{\frac{1-D}{2}}\left( \sum_{k=x_{n-1}}^{x_n-1}
|\alpha_k|^2 \right)^{(\zeta+1)/2} \\
& \leq C \sum_{n=1}^\infty\left((x_n-1-x_{n-1})^{1-D} \sum_{k=x_{n-1}}^{x_n-1}
|\alpha_k|^2 \right)^{(\zeta+1)/2}
\end{align}

If we assume that $x_n$ is chosen as in the statement of Lemma \ref{l.verification} and $D\in(1-\gamma,1)$ we can use \eqref{e.localL2estimate*2} to deduce
\begin{align}
\notag \int & \left| \sum_{n=1}^\infty \sum_{k=x_{n-1}}^{x_n-1}
\mathrm{Re}(\alpha_k e^{i\psi_k})^\Upsilon \mathrm{Im}(\alpha_k e^{i\psi_k})^{\zeta-\Upsilon}\sum_{j=x_{n-1}}^{k} \alpha_j e^{i\omega(j,\eta)} \right| \, d\nu(\eta)\\
& \leq \sum_{n=1}^\infty 2^{-\frac{n(\gamma+D-1)}{2}(\zeta+1)}\leq \left(\sum_{n=1}^\infty 2^{-\frac{n(\gamma+D-1)}{2}}\right)^{\zeta+1}
\end{align}

So we bound this term by $K^{\zeta+1}$, for some constant $K$. Note however that by taking the sum as $\sum_{n=N}^\infty$ rather than $\sum_{n=1}^\infty$, if we take a large $N$ we may make our $K$ to be as small as we desire. In particular, we can make sure that $0<K<\min(\mathcal R,1)$.

Then at the end we get
\begin{align}
\int &\left|\sum_{n=N}^\infty \sum_{k=x_{n-1}}^{x_n-1} \left( \frac{1 + |\alpha_k|^2 - 2 \mathrm{Re} (\alpha_k e^{i\psi(k)})}{(1 - 2 \alpha_k e^{i\psi(k)} + \alpha_k^2 e^{2i\psi(k)})} \right)^s\sum_{j=x_{n-1}}^{k} \alpha_j e^{i\omega(j,\eta)}\right| \, d\nu(\eta) \notag \\
& \leq \text{ constant}+\int \left|\sum_{n=1}^\infty \sum_{k=x_{n-1}}^{x_n-1}
(\text{Terms that are linear in $\alpha_k$})\sum_{j=x_{n-1}}^{k} \alpha_j e^{i\omega(j,\eta)}\right| \, d\nu(\eta) \notag\\
& \qquad + |C_{2,1} K^2| +|C_{2,2}K^2| +|C_{2,3} K^2| + \ldots \notag \\
& \qquad + |C_{3,1}K^3|+|C_{3,2}K^3|+|C_{3,3}K^3| +|C_{3,4}K^3|\notag \\
& \qquad + \ldots\notag\\
& \qquad + |D_{2,1} K^2| + |D_{2,2} K^2| + |D_{2,3} K^2| + \ldots \notag \\
& \qquad + |D_{3,1} K^3| + |D_{3,2} K^3| + |D_{3,3}K^3| + |D_{3,4}K^3| \\
& \qquad + \ldots . \notag
\end{align}

And since $K<\mathcal R$ we are within the region of absolute convergence of both \eqref{e.Taylor} and \eqref{e.Taylor2}, so we are done. Thus we only need to worry about the linear terms in \eqref{e.eistau}.

So to show the second term of \eqref{e.key} converges, it suffices to show for $s=1$,
\begin{equation} \label{e.ignorealpha2}
\sum_{n=1}^\infty \sum_{k=x_{n-1}}^{x_n-1} e^{is\tau(k)}\left( -2s\mathrm{Re}(\alpha_ke^{i\psi(k)})+2s\alpha_ke^{i\psi(k)} \right)\sum_{j=x_{n-1}}^{k} \alpha_j e^{i\omega(j,\eta)}
\end{equation}
converges for $\nu$-almost every $\eta$.

By \eqref{e.psi} it suffices to show that
\begin{equation}\label{e.needtoconverge}
\sum_{n=1}^\infty\sum_{k=x_{n-1}}^{x_n-1}
\alpha_ke^{i\omega(k)}e^{is\tau(k)}\sum_{j=x_{n-1}}^{k} \alpha_j e^{i\omega(j,\eta)}
\end{equation}
converges for $\nu$-almost every $\eta$ and every $s\in\mathbb N$.

We note that in regard to $s$, \eqref{e.needtoconverge} are stronger than necessary. At this stage we only need them to be true for $s=2$. However, later we will perform an iterated inductive argument, and $s$ will increase by $1$ at each step of that iteration. The number of iterations required will be bounded in an $n$-independent way. Thus it is easier to just prove the relevant statements for all $s \in \mathbb N$.

Let us now write out the iteration scheme. We subdivide the interval $[x_{n-1}, x_n)$ into $N_n$ subintervals, by picking numbers $y_1(n,l)$ so that
\begin{equation}
x_{n-1} = y_1(n,0) \le y_1(n,1) \le \ldots \le y_1(n,N_n) = x_n.
\end{equation}

We then define the next generations of this iteration scheme inductively. Suppose that
$y_i(n,l_1,l_2,\ldots, l_i)$ has been defined for $i=1,2,\ldots, j-1$. Then for fixed $n \in \mathbb N$ and $l_i \in \{1,2,\ldots, N_n\}$ for $i \leq j-1$, we pick numbers $y_j(l_1,l_2,\ldots l_j)$ again with $l_j \in \{1,2,\ldots, N_n\}$ satisfying
\begin{align*}
y_{j-1} (l_1,\ldots, l_{j-2},l_{j-1}-1) & = y_{j}(l_1,\ldots, l_{j-2},l_{j-1}-1,0) \\
& \le y_{j}(l_1,\ldots, l_{j-2},l_{j-1}-1,1) \\
& \le \ldots \\
& \le y_{j}(l_1,\ldots, l_{j-2},l_{j-1}-1,N_n) \\
& = y_{j-1}(l_1,\ldots, l_{j-2},l_{j-1}).
\end{align*}
We will usually write $y_i$ so that only the last argument is explicit, the others are suppressed from the notation.

We will now prove the following lemma, which is an analogue of \cite[Lemma 2.3]{r} and shows how the iteration scheme reduces the problem at hand to the analogous problem on finer and finer partitions.

\begin{lemma}\label{l.mainlemma}
In order for
\begin{equation}\label{e.lemmaj-1}
\sum_{n=1}^\infty \sum^{N_n}_{l_1,l_2,\ldots, l_{j-1}=1} \left|\sum_{k=y_{j-1}(l_{j-1}-1)}^{y_{j-1}(l_{j-1})-1} \alpha_k e^{i\omega(k,\eta)} e^{i s\tau(k,\eta)} \sum_{m=y_{j-1}(l_{j-1}-1)}^{k} \alpha_m e^{i\omega(m,\eta)}\right|
\end{equation}
to converge for $\nu$-almost every $\eta$ and $s \in \mathbb N$, it suffices to show that
\begin{equation}\label{e.sufficestoshow}
\sum_{n=1}^\infty \sum^{N_n}_{l_1,l_2,\ldots, l_{j}=1}\left| \sum_{k=y_{j}(l_{j}-1)}^{y_{j}(l_{j})-1} \alpha_k e^{i\omega(k,\eta)} e^{is \tau(k,\eta)} \sum_{m=y_{j}(l_{j}-1)}^{k} \alpha_m e^{i\omega(m,\eta)}\right|
\end{equation}
converges for $\nu$-almost every $\eta$ and $s \in \mathbb N$.
\end{lemma}

\begin{proof}
Let us denote the restriction of the sequence $\alpha$ to $[x_{n-1},x_n)$ by $\alpha^{(n)}$, that is,
\begin{equation}
\alpha^{(n)}_m = \begin{cases} 0, \text{ when $m \notin [x_{n-1},x_n)$} \\ \alpha_m \text{ when $m\in [x_{n-1},x_n)$}
\end{cases}
\end{equation}
We set
\begin{equation}\label{e.Nndef}
N_n := \max \left\{ 1, \left\lfloor \frac{1}{\sqrt{\sum_{m = x_{n-1}}^{x_n-1} (x_n  - x_{n-1})^{1-D} |\alpha_m|^2}} \right\rfloor \right\}.
\end{equation}

From \eqref{e.goal} and \eqref{e.Nndef}, we see that
\begin{equation}\label{e.Nnbound}
\frac{1}{2} \leq N_n \sqrt{\sum_{m = x_{n-1}}^{x_n-1}|x_n  - x_{n-1}|^{1-D}| \alpha_m|^2} \leq 1,
\end{equation}
provided that $n$ is sufficiently large.

Let us analyze part of the sum from \eqref{e.lemmaj-1}. Given a fixed choice of $l_1,l_2,\ldots, l_{j-1}$, note that
\begin{align}
\sum_{k=y_{j-1}(l_{j-1}-1)}^{y_{j-1}(l_{j-1})-1} & \alpha_k e^{i\omega(k,\eta)} e^{is\tau(k,\eta)} \sum_{m=y_{j-1}(l_{j-1}-1)}^{k} \alpha_m e^{i\omega(m,\eta)} \notag \\
& = \sum^{N_n}_{l_{j}=1} \sum_{k=y_{j}(l_{j}-1)}^{y_{j}(l_{j})-1} \alpha_k e^{i\omega(k,\eta)} e^{is\tau(k,\eta)} \sum_{m=y_{j-1}(l_{j-1}-1)}^{k} \alpha_m e^{i\omega(m,\eta)} \notag \\
& = \sum^{N_n}_{l_{j}=1} \sum_{k=y_{j}(l_{j}-1)}^{y_{j}(l_{j})-1} \alpha_k e^{i\omega(k,\eta)} e^{is\tau(k,\eta)} \times \left( \sum_{m=y_{j-1}(l_{j-1}-1)}^{y_{j}(l_{j}-1)-1} \alpha_m e^{i\omega(m,\eta)} \right. \notag \\
& \qquad + \left. \sum_{m=y_{j}(l_{j}-1)}^{k} \alpha_me^{i\omega(m,\eta)}\right) \notag \\
& = \sum^{N_n}_{l_{j}=1} \sum_{k=y_{j}(l_{j}-1)}^{y_{j}(l_{j})-1} \alpha_k e^{i\omega(k,\eta)} e^{is\tau(k,\eta)} \sum_{m=y_{j-1}(l_{j-1}-1)}^{y_{j}(l_{j}-1)-1} \alpha_m e^{i\omega(m,\eta)} \label{e.recursion1st}\\
& \qquad + \sum^{N_n}_{l_{j}=1} \sum_{k=y_{j}(l_{j}-1)}^{y_{j}(l_{j})-1} \alpha_k e^{i\omega(k,\eta)} e^{is\tau(k,\eta)} \sum_{m=y_{j}(l_{j}-1)}^{k} \alpha_m e^{i\omega(m,\eta)}. \label{e.recursion2nd}
\end{align}
The second part of the last expression, \eqref{e.recursion2nd}, is the same form as \eqref{e.needtoconverge} except the summation is on a smaller region. The other term, \eqref{e.recursion1st}, can be treated with the same summation by parts argument as above. We have
\begin{align}
\sum_{k=y_{j}(l_{j}-1)}^{y_{j}(l_{j})-1} & \alpha_k e^{i\omega(k,\eta)} e^{is\tau(k,\eta)} \notag \\
& = e^{is\tau(y_j(l_{j}))} \sum_{\ell=y_j(l_{j}-1)}^{y_j(l_{j})} \alpha_\ell e^{i\omega(\ell,\eta)} \label{e.S} \\
& \qquad - \sum_{k=y_j(l_j-1)}^{y_j(l_j)-1} (e^{is\tau(k+1,\eta)} - e^{is\tau(k,\eta)}) \sum_{\ell=y_j(l_j-1)}^{k} \alpha_\ell e^{i\omega(\ell,\eta)}. \label{e.lastterm}
\end{align}
If we plug in the first part of the sum, \eqref{e.S}, into \eqref{e.recursion1st}, and then integrate with respect to $\nu$, we obtain
\begin{align*}
\sum^{N_n}_{l_{j}=1}\int_0^{2\pi} & \left| e^{is\tau(y_j(l_j))}\sum_{\ell=y_j(l_j-1)}^{y_j(l_j)-1}\alpha_\ell e^{i\omega(\ell,\eta)}\sum_{m=y_{j-1}(l_{j-1}-1)}^{y_{j}(l_{j}-1)-1} \alpha_m e^{i\omega(m,\eta)}\right| \, d\nu(\eta) \\
& \leq \sum^{N_n}_{l_{j}=1} \int_0^{2\pi} \left\vert \sum_{\ell=y_j(l_j-1)}^{y_j(l_j)-1} \alpha_\ell e^{i\omega(\ell,\eta)} \right\vert \left|\sum_{m=y_{j-1}(l_{j-1}-1)}^{y_{j}(l_{j}-1)-1} \alpha_m e^{i\omega(m,\eta)}\right| \, d\nu(\eta) \\
& \lesssim C \sum^{N_n}_{l_{j}=1} (y_j(l_j) - y_j(l_j-1))^{\frac{1-D}{2}}\left\Vert\alpha_{\chi[y_j(l_j-1),y_j(l_j)-1]}\right\Vert_2 \\
& \qquad \qquad \times (y_{j}(l_{j}-1) - y_{j-1}(l_{j-1}-1))^{\frac{1-D}{2}}\left\Vert\alpha_{\chi[y_{j-1}(l_{j-1}-1),y_{j}(l_{j}-1)-1]}\right\Vert_2 \\
& \le (x_n-x_{n-1})^{\frac{1-D}{2}} \Vert  \alpha^{(n)} \Vert_2 \sum^{N_n}_{l_{j}=1} (y_j(l_j)-1 - y_j(l_j-1))^{\frac{1-D}{2}}\left\Vert \alpha_{\chi[y_j(l_j-1),y_j(l_j)-1]} \right\Vert_2 \\
& \le (x_n-x_{n-1})^{1-D}  \Vert \alpha^{(n)} \Vert_2^2 \sqrt{N_n} \\
& \le (x_n-x_{n-1})^{\frac{3(1-D)}{4}} \Vert  \alpha^{(n)} \Vert_2^{3/2} .
\end{align*}
Here we used Theorem \ref{t3.1} in the second step, the  Cauchy-Schwarz inequality in the fourth step, and \eqref{e.Nnbound} in the fifth step. This bound is summable.

Finally, we address the expression obtained by plugging \eqref{e.lastterm}  into \eqref{e.recursion1st}. We again expand out $e^{is\tau(k+1)}-e^{is\tau(k)}$ and ignore the terms of order $\alpha^2$ or higher, to obtain an expression of the form \eqref{e.ignorealpha2}.

In other words, we have that the modulus of this term is bounded by
\begin{align*}
&\sum^{N_n}_{l_{j}=1} \left|\sum_{k=y_j(l_j-1)}^{y_j(l_j)-1} (e^{is\tau(k)} - e^{is\tau(k-1)}) \sum_{\ell=y_j(l_j-1)}^{k} \alpha_\ell e^{i\omega(\ell,\eta)} \sum_{m=y_{j-1}(l_{j-1}-1)}^{y_{j}(l_{j}-1)-1} \alpha_m e^{i\omega(m,\eta)} \right| \\
& \lesssim \sum^{N_n}_{l_{j}=1} \left| \sum_{m=y_{j-1}(l_{j-1}-1)}^{y_{j}(l_{j}-1)-1} \alpha_m e^{i\omega(m,\eta)}\right| \times \left| \sum_{k=y_j(l_j-1)}^{y_j(l_j)-1} e^{i(s+1)\tau(k,\eta)} \alpha_k e^{i\omega(k,\eta)} \sum_{\ell=y_j(l_j-1)}^{k} \alpha_\ell e^{i\omega(\ell,\eta)} \right|.
\end{align*}


It suffices to show the $\nu$-almost everywhere boundedness of the maximal function
\begin{equation}
M_n(\eta) = \max_{l_{j}=1,2,\ldots,N_n} \left| \sum_{m=y_{j-1}(l_{j-1}-1)}^{y_{j}(l_{j}-1)-1} \alpha_m e^{i\omega(m,\eta)} \right|.
\end{equation}
In fact, an even stronger statement is true. Clearly,
\begin{align*}
 \int & M_n(\eta)^2 \, d\nu(\eta) \\
& \le \sum_{l_1,\ldots,l_{j}=1}^{N_n} \int \left| \sum_{m=y_{j-1}(l_{j-1}-1)}^{y_{j}(l_{j}-1)-1} \alpha_m e^{i\omega(m,\eta)} \right|^2 \, d\nu(\eta) \\
& \lesssim \sum_{l_1,\ldots,l_{j}=1}^{N_n} (y_{j}(l_{j}-1) - y_{j-1}(l_{j-1}-1))^{1-D}\left\Vert \alpha_{\chi[y_{j-1}(l_{j-1}-1),y_{j}(l_{j}-1)-1]} \right\Vert_2^2  \\
& \le \sum_{l_1,\ldots,l_{j}=1}^{N_n} (y_{j-1}(l_{j-1}) - y_{j-1}(l_{j-1}-1))^{1-D}\left\Vert \alpha_{\chi[y_{j-1}(l_{j-1}-1),y_{j-1}(l_{j-1})-1]} \right\Vert_2^2 \\
& \le {(x_n-x_{n-1})^{1-D}}\| \alpha^{(n)}\|_2^2 N_n \\
& \le C {(x_n-x_{n-1})^{\frac{1-D}{2}}} \|\alpha^{(n)}\|_2 \\
& \in \ell_1(\mathbb N),
\end{align*}
where we used Theorem \ref{t3.1} in the second step.

Thus, $\sum_n M_n(\eta)^2 < \infty$ for $\nu$-almost every $\eta$, which in particular implies that $M_n(\eta)$ is bounded for  $\nu$-almost every $\eta$, completing the proof.
\end{proof}

By \cite[Corollary 10.12.2]{S05}, we have
\begin{equation}{\label{e.absofdifftau}}
|\tau(n+1,\eta,\beta)-\tau(n,\eta,\beta)|\le C(|\alpha_n|+|\alpha_{n+1}|).
\end{equation}
Due to the mean value theorem and \eqref{e.absofdifftau}, the absolute value of \eqref{e.key} can be estimated by
\begin{align}
\sum_{k=x_{n-1}+1}^{x_n} & |\tau(k,\eta) - \tau(k-1,\eta)| \left| \sum_{j=x_{n-1}}^{k-1} \alpha_j e^{i\omega(j,\eta)} \right| \notag \\
& \le \sum_{k=x_{n-1}+1}^{x_n} (|\alpha_{k-1}| + |\alpha_k|) \left|\sum_{j=x_{n-1}}^{k-1} \alpha_j e^{i\omega(j,\eta)}\right|. \label{e.secpart}
\end{align}

Due to \eqref{e.secpart} and Lemma \ref{l.mainlemma} all that remains to do is to show that for suitably chosen $y_i$'s and for some $j$,
\begin{equation}\label{e.|alpha|sum}
\sum_{n=1}^\infty \sum_{l_1,l_2,\ldots, l_j=1}^{N_n} \sum_{\ell=y_j(l_j-1)}^{y_j(l_j)-1} |\alpha_\ell| \left| \sum_{t=y_j(l_j-1)}^\ell \alpha_t e^{i\omega(t,\eta)} \right| < \infty
\end{equation}
for $\nu$-almost every $\eta$. We then obtain \eqref{e.FinitePruferRadius2} for $\nu$-almost every $\eta$ and the proof will be finished.

The choice of these partitions is literally identical with Remling's choices in the Schr\"odinger case \cite[pp.~164--165]{r}, 
the only exceptions being that $\|V_n\|_1$ is replaced with $\|\alpha^{(n)}\|_1$,  $\|V_n\|_2$ is replaced with $|x_n - x_{n-1}|^{\frac{1-D}{2}}\|\alpha^{(n)}\|_2$ and $V_{nm}$ is replaced with $ |y_m-y_{m-1}|^{\frac{1-D}{2}} \alpha \chi_{(y_{m-1}, y_m-1)}$. For the benefit of the reader, we reproduce the algorithm for choosing the partitions here.

First, let us set aside the $y_j$ notation for the partition points for now. Instead of writing $y_j(l_1,l_2,\ldots, l_j)$ for the $N_n^j$ partition points, we instead write $z_0, z_1,\ldots z_{N_n^j}$ (where the $z_i$'s are in $\{x_{n-1}, x_{n-1}+1,\ldots ,x_n\}$). For an integer $m \in [0, N_n^j]$, we define
$$
\|\alpha_{n,m}\|_1 = \|\alpha \chi_{(z_{m-1}, z_m-1)}\|_1 = \sum_{j=z_{m-1}}^{z_m -1} |\alpha_j|
$$
and
$$
\|\alpha_{n,m}\|_2 = \|\alpha \chi_{(z_{m-1}, z_m-1)}\|_2 = \left(\sum_{j=z_{m-1}}^{z_m -1} |\alpha_j|^2 \right)^{\frac{1}{2}}.
$$
We first define $z_0=x_{n-1}$. We now proceed to define the other partition points inductively. Assume for some integer $m$ $z_0$, $z_1$, $z_2, \ldots z_{m-1}$ have been chosen. We then define $z_m$ to be the smallest integer strictly larger than $z_{m-1}$ so that $\|\alpha_{n,m}\|_1 > \|\alpha^{(n)}\|_1 N_n^{-3j/4}$ holds. If no such integer less than $x_n$ exists, we instead set $z_m=x_n$. Let us refer to this final index as $M$. Note that $M$ must satisfy $M\leq N_n^{3j/4}$. Since $N_n^j-(N_n^{3j/4}+1)> N_n^{3j/4}$ for large enough $n$, we can add the numbers $z_m-1$ for $m=1,2,\ldots, M$ to this collection of $z_i$'s. We then choose the remaining $z_i$'s arbitrarily.

As a result of our construction the $z_i$'s have the following property. Let us renumber the $z_i$'s we have chosen as $x_n=z_0\leq z_1\leq \ldots\leq z_{N_n^j}=x_n$. In this case, for each $m=1,2,\ldots M$, there are two possibilities:
\begin{itemize}
\item $\{z_{m-1},\ldots,z_m-1\}$ consists of a single point,
\item 
 $\|\alpha_{n,m}\|_1 \leq \|\alpha^{(n)}\|_1 N_n^{-3j/4}$.
\end{itemize}
It is possible that for some $m$ the set $\{z_{m-1},\ldots,z_m-1\}$ is empty, in which case we say the second possibility above holds. To obtain the original labelling in terms of this new labelling, we can set $y_i(n,l_1,\ldots,l_i)=z_m$, where
\begin{equation}
m=\sum_{r=1}^{i-1}(l_r-1)N_n^{j-r}+l_iN_n^{j-i}
\end{equation}
Now we integrate the summands of \eqref{e.|alpha|sum} with respect to $d\nu$. The result will be bounded by a constant times

\begin{equation}
\sum_{m=1}^{N_n^j}\|\alpha_{n,m}\|_1 (z_m - z_{m-1})^{\frac{1-D}{2}} \|\alpha_{n,m}\|_2
\end{equation}

We consider separately

\begin{itemize}
\item the sum over those $m$ for which $y_{m-1}=y_m-1$
\item the sum over those $m$ for which $y_{m-1}\neq y_m-1$
\end{itemize}
The first of those sums clearly does not exceed $(x_n-x_{n-1})^{1-D}\|\alpha^{(n)}\|_2^2$ and the second one can be estimated by
\begin{align}\label{e.Nnbound2}
N_n^{-3j/4}\|\alpha^{(n)}\|_1\sum_{m=1}^{N_n^j} (z_m- z_{m-1})^{\frac{1-D}{2}}\|\alpha_{n,m}\|_2 & \leq N_n^{-j/4}\|\alpha^{(n)}\|_1 (x_n - x_{n-1})^{\frac{1-D}{2}}\|\alpha^{(n)}\|_2 \\
& \leq C \|\alpha^{(n)}\|_1 \|(x_n - x_{n-1})^{\frac{1-D}{2}} \alpha^{(n)}\|_2^{j/4+1}. \notag
\end{align}
By \eqref{e.goal2new}, this is also summable provided $j\geq 4N$ (with $N$ from \eqref{e.Nassumptionnew}).
\end{proof}

\begin{remark}
Note that the last line of the proof implies that the iteration scheme in Lemma~\ref{l.mainlemma} only needs to be performed at most $4N$ times, where $N$ is an $n$-independent quantity.
\end{remark}


\begin{proof}[{Proof of Theorem \ref{t4.1}}]
Our goal is to show that \eqref{e.FinitePruferRadius-weakened} implies \eqref{e.FinitePruferRadius}. It suffices to prove that
\begin{equation}\label{e.interpolation}
\lim_{n\to\infty} \max_{\xi\in [x_{n-1}, x_n-1]}\left|\sum_{j=x_{n-1}}^\xi \alpha_j e^{i\omega(j,\eta)}e^{i\tau(j,\eta)} \right|=0
\end{equation}
for $\nu$-almost every $\eta$.

Since \eqref{e.vcass} holds with some $\gamma > 0$, we have $\frac{2}{1 + \gamma} < 2$, so that it is possible to choose some $p$ obeying
\begin{equation}\label{e.choiceofp}
\frac{2}{1 + \gamma} < p < 2.
\end{equation}
With this value of $p$, we find that
\begin{align*}
\sum_{n = 1}^\infty |\alpha_n|^p & = \sum_{n = 1}^\infty \left( n^{-\frac{p \gamma}{2}} \right) \left( n^{\frac{p \gamma}{2}}  |\alpha_n|^p \right) \\
& \le \left( \sum_{n = 1}^\infty \left( n^{-\frac{p \gamma}{2}} \right)^{(1 - \frac{p}{2})^{-1}} \right)^{1 - \frac{p}{2}} \left( \sum_{n = 1}^\infty \left( n^{\frac{p \gamma}{2}}  |\alpha_n|^p \right)^{\frac{2}{p}} \right)^{\frac{p}{2}} \\
& = \left( \sum_{n = 1}^\infty n^{-\frac{p \gamma}{2-p}} \right)^{1 - \frac{p}{2}} \left( \sum_{n = 1}^\infty n^\gamma  |\alpha_n|^2 \right)^{\frac{p}{2}} \\
& < \infty
\end{align*}
where we applied the H\"older inequality in the second step as well as $\frac{p \gamma}{2-p} > 1$ (which follows from \eqref{e.choiceofp}) and \eqref{e.vcass} in the final step.

Let $q$ to be the H\"older conjugate of $p$ (so that $1/p+1/q=1$). Then it follows by \cite{Kiselev} that the maximal function
$$
M_n(\eta)=\max_{\xi\in [x_{n-1}, x_n-1]}\left|\sum_{j=x_{n-1}}^\xi \alpha_je^{i\omega(j,\eta)} \right|.
$$
obeys
\begin{equation}\label{e.Kiselev}
\left(\int M_n(\eta)^q d\nu(\eta)\right)^{1/q}\leq C\left(\sum_{j=x_{n-1}}^{x_n-1}|\alpha_j|^p\right)^{1/p}.
\end{equation}

We now just repeat the steps from the proof of Proposition~\ref{t4.1-weakened}, except we are trying to prove convergence to zero, rather than summability. Let us define $\xi(\eta)$ so the maximum in \eqref{e.interpolation} is attained for $\xi=\xi(\eta)$. We use an integration by parts calculation similar to \eqref{e.key} to bound \eqref{e.interpolation} by

\begin{align}
&\left| e^{i\tau(x_n,\eta)} \sum_{j = x_{n-1}}^{\xi(\eta)} \alpha_j e^{i\omega(j,\eta)}\right| \notag \\
& \quad + \left|\sum_{k = x_{n-1}}^{\xi(\eta)} (e^{i\tau(k,\eta)} - e^{i\tau(k+1,\eta)}) \sum_{j = x_{n-1}}^{k} \alpha_j e^{i\omega(j,\eta)}\right|, \label{e.IBPinterpolate}
\end{align}
Let's call the first term $S(n,\eta)$. Using \eqref{e.Kiselev} we deduce that $\int S(n,\eta)d\nu(\eta)\leq  C\| \alpha^{(n)}\|^q_{\ell^p}$ which is summable because $q>p$. Thus $S(n,k)$ itself is in $\ell^q$ for $\nu$-almost every $\eta$, and so clearly goes to $0$ as $n\to\infty$.

We then have to consider the second term of \eqref{e.IBPinterpolate}. We make a slight modification to the iterated partition argument used in the proof of Proposition~\ref{t4.1-weakened}. We define $y(n,l)$ the same way as in that section, and define $L_1=L_1(n,\eta)$ so that $y_1(n,L_1-1)< \xi(\eta)\leq y_1(n,L_1)$.

Repeating the arguments of Section \ref{sec.4}, it then suffices to consider
\begin{align}&\sum_{l=1}^{L_1-1} \left|\sum_{k=y_1(l-1)}^{y_1(l)-1}  \alpha_k e^{i\omega(k)}e^{is\tau(k)}\sum_{j=x_{n-1}}^{k}\alpha_j e^{i\omega(j)}\right| \label{e.L1term1}\\
&+ \left|\sum_{k=y_1(L_1-1)}^{\xi(\eta)-1}  \alpha_k e^{i\omega(k)}e^{is\tau(k)}\sum_{j=x_{n-1}}^{k}\alpha_j e^{i\omega(j)}\right|. \label{e.L1term2}
  \end{align}
The \eqref{e.L1term1} term is identical to that considered in the proof of Proposition~\ref{t4.1-weakened}, and can be handled the same way. So we only have to consider the \eqref{e.L1term2} term. We repeat the steps that led us to obtain \eqref{e.recursion1st}, \eqref{e.recursion2nd}, \eqref{e.S}, and \eqref{e.lastterm}. Thus it now suffices for us to consider the three terms
\begin{align}
&  \left|\sum_{k=y_{j}(L_{1}-1)}^{\xi(\eta)-1} \alpha_k e^{i\omega(k,\eta)} e^{is\tau(k,\eta)} \sum_{m=y_{j}(L_{1}-1)}^{k} \alpha_m e^{i\omega(m,\eta)} \right|,\label{e.R98(23)}\\
&\left|\sum_{\ell=y_j(L_1-1)}^{\xi(\eta)-1} \alpha_\ell e^{i\omega(\ell,\eta)}\sum_{m=x_{n-1}}^{y_j(L_1-1)-1} \alpha_m e^{i\omega(m,\eta)}\right|\label{e.R98(21)}\\
&\left|\sum_{k=y_j(L_1-1)}^{\xi(\eta)-1} (e^{is\tau(k+1,\eta)} - e^{is\tau(k,\eta)}) \sum_{\ell=y_j(L_1-1)}^{k} \alpha_\ell e^{i\omega(\ell,\eta)}\right|\times \left|\sum_{m=x_{n-1}}^{y_j(L_1-1)-1} \alpha_m e^{i\omega(m,\eta)}\right|.
\label{e.R98(22)-pre}
\end{align}
Note that from our discussion in the proof of Proposition~\ref{t4.1-weakened}, to show that \eqref{e.R98(22)-pre} tends to zero for $\nu$-almost every $\eta$ it suffices to show that
\begin{equation}\label{e.R98(22)}
\left|\sum_{k=y_j(L_1-1)}^{\xi(\eta)-1} \alpha_k e^{i\omega(k,\eta)} e^{is\tau(k,\eta)}\sum_{\ell=y_j(L_1-1)}^{k} \alpha_\ell e^{i\omega(\ell,\eta)}\right|\times \left|\sum_{m=x_{n-1}}^{y_j(L_1-1)-1} \alpha_m e^{i\omega(m,\eta)}\right|
\end{equation}
 goes to $0$ for $\nu$-almost every $\eta$.

Using \eqref{e.Kiselev} we can see that \eqref{e.R98(21)} and the second multiplicand of \eqref{e.R98(22)} tend to zero for $\nu$-almost every $\eta$. So it suffices to show that terms of the form \eqref{e.R98(23)} go to zero for $\nu$-almost every $\eta$.

 We the perform the iterative partition procedure given by Lemma \ref{l.mainlemma} $j$ times. We define $L_2,L_3,\ldots L_j$ at each step of the procedure analogously to $L_1$. We also make explicit the $n$-dependence of the $y_j$. Now it suffices to show
\begin{equation}
\sum_{k=y_j(n;L_1,L_2,\ldots, L_{j-1}, L_j-1)}^{\xi(\eta)-1} \left|\alpha_k \right|\left|\sum_{\ell=y_j(n;L_1,L_2,\ldots, L_{j-1}, L_j-1)}^{k} \alpha_\ell e^{i\omega(\ell,\eta)}\right|
\end{equation}
converges to zero for $\nu$-almost every $\eta$. But we can bound this by

\begin{equation}\label{e.finalmax}
2M_n(\eta)\max_{l_1,\ldots,l_j}\sum_{k=y_j(n;l_1,l_2,\ldots, l_{j-1}, l_j-1)}^{y_j(n;l_1,l_2,\ldots, l_{j-1}, l_j)-1} \left|\alpha_k \right|.
\end{equation}
We already know that $M_n(\eta)$ tends to zero for $\nu$-almost every $\eta$.
The way the $y_j$ are chosen, there are two possibilities. Either $y_j(l_j-1)=y_j(l_j)-1$ or $\|\alpha_{n,l_j-1}\|_1\leq \|\alpha^{(n)}\|_1 N_n^{-3j/4}$. Taking the $\max$ in \eqref{e.finalmax} over all $l_j$ for which the first possibility holds results in an expression bounded by $\sup_n |\alpha_n|$. Assuming $j$ is chosen so $j\geq 4N$, we know that taking the $\max$ in \eqref{e.finalmax} over all $l_j$ for which the second possibility holds results in  a bounded expression by \eqref{e.Nnbound}.

In the second case, we have
\begin{align}
\max_{l_1, \ldots, l_j} \sum_{k = y_j(n;l_1,l_2,\ldots, l_{j-1}, l_j-1)}^{y_j(n;l_1,l_2,\ldots, l_{j-1}, l_j)-1} \left|\alpha_k \right| & \le \max_{l_1,\ldots,l_j}  \|\alpha_{n,l_j-1}\|_1 \notag \\
& \le \|\alpha^{(n)}\|_1 N_n^{-3j/4} \notag \\
& \le C(j) \|\alpha^{(n)}\|_1 \|(x_n - x_{n-1})^{\frac{1-D}{2}}\alpha^{(n)}\|_2^{\frac{3j}{4}}, \label{e.sup}
\end{align}
where we used \eqref{e.Nnbound} in the last step. By~\eqref{e.goal2new}, this expression is bounded provided $j\geq \frac{4N}{3}$.
\end{proof}

\section{Proof of the Main Theorem}\label{sec.5}

We are now ready to give the

\begin{proof}[Proof of Theorem~\ref{t.main}]
Under the condition \eqref{e.vcass}, we have to show that
$$
S = \{ \eta \in [0,2\pi) : \sup_{n \ge 0} \|T_n(e^{i\eta})\| = \infty \}
$$
has Hausdorff dimension at most $1 - \gamma$.

Assuming this fails, and hence $\dim_\mathrm{H}(S) > 1 - \gamma$, it follows from standard results in measure theory (as recalled in \cite{r2}; see also \cite{Falconer}) that there is a finite Borel measure $\nu$ with the following two properties:
\begin{itemize}

\item[{\rm (i)}] There is a $\delta > 0$ such that $\nu$ is supported by $S \cap (\delta,2\pi-\delta)$.

\item[{\rm (ii)}] There is a $D \in (1-\gamma, 1)$ such that $\nu$ is uniformly $D$-H\"older continuous, that is, $\nu(I) \lesssim |I|^D$ for every interval $I \subseteq (0,2\pi)$.

\end{itemize}
In particular, Theorem~\ref{t4.1} is applicable to this measure. That theorem, however, ensures for $\nu$-almost every $\eta$ the boundedness of the Pr\"ufer radius as $n \to \infty$ for all initial phases $\beta$, and in particular those two that correspond to the entries of the Szeg\H{o} matrices. In particular, for $\nu$-almost every $\eta$, the Szeg\H{o} matrices $T_n(e^{i\eta})$ remain bounded as $n \to \infty$, in contradiction with the definition of $S$ and the fact that $S$ supports the measure $\nu$. This completes the proof.
\end{proof}


\begin{thebibliography}{[aa]}
%

\bibitem{ck} M.\ Christ, A.\ Kiselev, WKB and spectral analysis of one-dimensional Schr\"odinger operators with slowly varying potentials, \textit{Commun.\ Math.\ Phys.}\ \textbf{218} (2001), 245--262.

\bibitem{D06} D.\ Damanik, Verblunsky coefficients with Coulomb-type decay, \textit{J.\ Approx.\ Theory} \textbf{139} (2006), 257--268.

\bibitem{DK04} D.\ Damanik, R.\ Killip, Half-line Schr\"odinger operators with no bound states, \textit{Acta Math.}\ \textbf{193} (2004), 31--72.

\bibitem{DK05} S.\ Denisov, S.\ Kupin, On the singular spectrum of Schr\"odinger operators with decaying potential, \textit{Trans.\ Amer.\ Math.\ Soc.}\ \textbf{357} (2005), 1525--1544.

\bibitem{Falconer} K.\ Falconer, \textit{Techniques in Fractal Geometry}, John Wiley \& Sons, Ltd., Chichester, 1997.

\bibitem{GI71} B.\ Golinski\u{\i}, I.\ Ibragimov, A limit theorm of G.~Szeg\H{o}, \textit{Izv.\ Akad.\ Nauk SSSR Ser.\ Mat.}\ \textbf{35} (1971), 408--427.

\bibitem{Kiselev} A.\ Kiselev, An interpolation theorem related to the a.e.\ convergence of integral operators, \textit{Proc.\ Amer.\ Math.\ Soc.}\ \textbf{127} (1999), 1781--1788.

\bibitem{r} C.\ Remling, The absolutely continuous spectrum of one-dimensional Schr\"odinger operators with decaying potentials, \textit{Commun.\ Math.\ Phys.}\ \textbf{193} (1998), 151--170.

\bibitem{r2} C.\ Remling, Bounds on embedded singular spectrum for one-dimensional Schr\"odinger operators, \textit{Proc.\ Amer.\ Math.\ Soc.} {\bf 128} (2000), 161--171.

\bibitem{S03} B.\ Simon, The Golinskii-Ibragimov method and a theorem of Damanik and Killip, \textit{Int.\ Math.\ Res.\ Not.}\ \textbf{36} (2003), 1973--1986.
    
\bibitem{S04} B.\ Simon, \textit{Orthogonal Polynomials on the Unit Circle. Part 1. Classical Theory}, AMS Colloquium Publications \textbf{54}, Part 1, American Mathematical Society, Province, RI, 2005.

\bibitem{S05} B.\ Simon, \textit{Orthogonal Polynomials on the Unit Circle. Part 2. Spectral Theory}, AMS Colloquium Publications \textbf{54}, Part 2, American Mathematical Society, Province, RI, 2005.

\bibitem{S11} B.\ Simon, \textit{Szeg\H{o}'s Theorem and its Descendants. Spectral Theory for $L^2$ Perturbations of Orthogonal Polynomials}, M.\ B.\ Porter Lectures, Princeton University Press, Princeton, NJ, 2011.
%
%

\end{thebibliography}
\end{document}